\numberwithin{equation}{section}
\newtheorem{thm}[equation]{Theorem}
\newtheorem{defn}[equation]{Definition}
\newtheorem{prop}[equation]{Proposition}
\newtheorem{cor}[equation]{Corollary}
\newtheorem{lemma}[equation]{Lemma}
\theoremstyle{definition}  
\newtheorem{example}[equation]{Example}
\newtheorem{remark}[equation]{Remark}
\newcommand{\cat}{\EuScript}    
\newcommand{\cC}{{\cat C}}
\newcommand{\cD}{{\cat D}}
\newcommand{\cF}{{\cat F}}
\newcommand{\cN}{{\cat N}}
\newcommand{\cS}{{\cat S}}
\newcommand{\cV}{{\cat V}}
\newcommand{\cU}{{\cat U}}
\newcommand{\Gr}{{\cat Grpd}}
\newcommand{\Set}{{\cat Set}}
\newcommand{\cCat}{{\cat Cat}}
\newcommand{\sSet}{s{\cat Set}}
\newcommand{\field}[1]  {\mathbb #1} 
\newcommand{\Z}         {\field Z}
\DeclareMathOperator*{\holim}{holim}
\DeclareMathOperator*{\hocolim}{hocolim}
\DeclareMathOperator{\spec}{Spec}
\DeclareMathOperator{\Spec}{Spec}
\DeclareMathOperator{\Hom}{Hom}
\DeclareMathOperator{\ob}{ob}
\DeclareMathOperator{\Tot}{Tot}
\newcommand{\ra}{\rightarrow}                   
\newcommand{\lra}{\longrightarrow}              
\newcommand{\llra}[1]{\stackrel{#1}{\lra}}      
\newcommand{\we}{\llra{\sim}}                   
\newcommand{\fib}{\twoheadrightarrow}           
\newcommand{\inc}{\hookrightarrow}              
\newcommand{\dbra}{\rightrightarrows}           
\newcommand{\m}{\mathcal M}         
\newcommand{\Sh}{\operatorname{Sh}} 
\newcommand{\tuborg}{\left\{\begin{array}{ll}}
\newcommand{\sluttuborg}{\end{array}\right.}
\newcommand{\cO}{{\mathcal O}}
\newcommand{\Aff}{{\cat Af\!f}}
\newcommand{\Ring}{{\cat Ring}}
\newcommand{\fpi}{{\pi_{oid}}}
\DeclareMathOperator{\equ}{equalizer}
\begin{document}
\email{sjh@math.huji.ac.il+}

\title{Descent for quasi-coherent sheaves on stacks}

\author{Sharon Hollander}

\address{Department of Mathematics,
Hebrew University, Jerusalem, Israel}

\date{November 10, 2006}
\subjclass{Primary 14A20 ; Secondary 18G55, 55U10}

\begin{abstract}
We give a homotopy theoretic characterization of sheaves
on a stack and, more generally, a presheaf of groupoids on an arbitary 
small site $\cC$.
We use this to prove homotopy invariance and generalized
descent statements for categories of sheaves and quasi-coherent sheaves.
As a corollary we obtain an alternate proof of a generalized change
of rings theorem of Hovey.
\end{abstract}

\maketitle

\section{Introduction}

The purpose of this paper is to continue the study of stacks
from the point of view of homotopy theory.  We generalize basic definitions
and constructions pertaining to (quasi-coherent) sheaves on stacks,
to an arbitrary small site $\cC$ and an arbitrary presheaf of groupoids
on $\cC$ (not just a stack). We use this point of view to give new
proofs of fundamental theorems in this setting.   

Classically, stacks are defined as those categories fibered in groupoids over $\cC$,
(or equivalently lax presheaves of groupoids on $\cC$) which satisfy descent 
\cite[Definition $4.1$]{DM}.

In \cite{H} we show that a category fibered in groupoids $F$ over $\cC$ 
is a stack if and only if the assignment 
satisfies the {\it homotopy sheaf condition}, that is, 
for each cover $\{U_i \ra X \in \cC \}$, the natural map
$$\xymatrix{ F(X) \ar[r]^-{\sim} &  \holim \Bigl( \prod F(U_i) \ar@2[r] &
\prod F(U_{ij}) \ar@3[r] & \prod F(U_{ijk}) \dots \Bigr)}$$ 
is an equivalence of categories, (where the homotopy limit here is taken in 
the category of small groupoids, denoted $\Gr$, which is a simplicial model 
category).

This characterization of stacks naturally leads to a model structure on 
categories fibered in groupoids over $\cC$, in which the fibrant objects 
are the stacks.  Similarly,  
one can consider the strict functors, or presheaves of groupoids on $\cC$,
denoted $P(\cC,\Gr)$.  Here too there is a {\it local} model structure, 
denoted $P(\cC,\Gr)_L$, in which the fibrant objects are those functors 
which are stacks or, equivalently, satisfy the homotopy sheaf condition.
Furthermore, there is a Quillen equivalence    
between these two model categories. (See \cite[Section $4$]{H}).

Since this paper will derive results about sheaves on stacks from the
ambient model category it makes no difference which of the 
Quillen equivalent model categories one chooses to work in.
For the sake of simplicity we will work in $P(\cC, \Gr)_L$.

Given a stack $\m$, on $\cC$, the category of sheaves on 
$\m$ \cite[Definition $4.10$]{DM} is defined as sheaves on the site $\cC/\m$.
The site $\cC/\m$ can be easily generalized to an arbitrary presheaf 
of groupoids $\m$ and site $\cC$ (see Section $2.1$).  Here
objects of $\cC/\m$ are morphisms $X \ra \m \in P(\cC,\Gr)$, with $X\in\cC$,
and the morphisms are triangles with a commuting homotopy. 
Covers in $\cC/\m$ are the collections of morphisms which forget 
to covers in $\cC$.  
Notice that the underlying category $\cC/\m$ is just the Grothendieck 
construction on the functor $\m:\cC^{op} \ra \Gr$. 
Also notice that if $\m$ is represented by an object $X \in \cC$ 
this is the usual topology on the over category $\cC/X$.

We prove that the category of sheaves on $\cC/\m$ is equivalent 
to the full subcategory of fibrations $F \fib \m$ in $P(\cC, \Gr)_L$ 
where the fibers $F(X) \fib \m(X)$ are discrete for each $X \in \cC$. 
In fact, this yields an embedding of sheaves on $\m$ as a full 
subcategory of the homotopy category $Ho(P(\cC,\Gr)_L/\m)$
(Corollary \ref{char-sheaves}).
 
Using this embedding we prove that a local weak equivalence
$\m \to \m'$ induces via the restriction functor an equivalence of categories
\[ \Sh(\cC/\m') \to \Sh(\cC/\m)\]
(Theorem \ref{eq-sh}).
This also holds for sheaves of abelian groups, simplicial sets, rings, modules.

We also present a definition of a quasi-coherent sheaf of modules over
a sheaf of rings $\cO$ in an arbitrary site, (see Definition \ref{qcdef}),
and show that the results of the previous paragraph holds for quasi-coherent sheaves (Corollary \ref{cor-qc}).

Classically, sheaves on an algebraic stack $\m$ are described via an atlas.
If $X \ra \m$ an atlas, sheaves on $\m$ can be written
as a sheaves on $X$, with an isomorphism of the two pullbacks to 
$X \times_\m X$ satisfying the cocycle condition.
 
We generalize this and prove the following {\it descent statement}:
given an $I$-diagram $\m_I$ in $P(\cC,\Gr)$  
there is an equivalence of categories
\[ \Sh(\cC/(\hocolim \m_i)) \to \holim \Sh(\cC/\m_i) \]
(where the homotopy limit is taken in $\cCat$ with the categorical model 
structure, see \cite{R}).
The same holds for sheaves taking values in any category with products 
(Proposition \ref{desc-sh}).  We prove also the 
analog for quasi-coherent sheaves of modules (Proposition \ref{desc-qcsh}).
We think of the diagram $\m_I$ as a generalized 
{\it presentation} of $\hocolim \m_I$.

Our descent statement generalizes the classical scenario since 
(\cite[Proposition A.$9$]{H2} given an atlas $X \ra \m$, 
the induced map below is a weak equivalence
$$\xymatrix{ \hocolim \Bigl( \cdots  X \times^h_\m X \times^h_m X  \ar@3[r] & 
X \times^h_\m X \ar@2[r] & X \Bigr) \ar[r]^\sim & \m,}$$
and so it follows that $Sh(\cC/\m)$ is the homotopy inverse limit 
of the categories
$$\xymatrix{ Sh(X) \ar@2[r] & Sh( X \times^h_\m X) \ar@3[r] & 
Sh( X \times^h_\m X \times^h_m X ) \dots }$$
which is a modern formulation of the classical statment written above
(see Section \ref{sectapp}).

A simple application of this descent statement (Proposition \ref{comodulesqc})
implies that the category of comodules over an Hopf algebroid $(A,\Gamma)$
is equivalent to the category of quasi-coherent sheaves on 
the presheaf of groupoids represented by the pair $(\spec A,\spec \Gamma)$ 
and so is also equivalent to quasicoherent sheaves on its stackification
$\m_{(A,\Gamma)}$ (which is its fibrant replacement in $P(\Aff_{flat},\Gr)_L$).

It follows that if $(A,\Gamma)$ and  $(B,\Gamma')$ are two weakly equivalent
Hopf algebroids then the categories of comodules on each 
are equivalent (Corollary \ref{eq-ha}).

The greater generality here is important for many reasons.
First we provide an elementary description of the category of sheaves 
on a stack which is independent of the choice of site and makes sense
for any stack $\m$ algebraic or not (compare with \cite[Chapter 12]{LM-B}). 
The descent statement shows that alternative descriptions of the category of 
sheaves on $\m$ can be obtained in many fashions, not just via an atlas,
and not only in geometric contexts. In particular, in the case of 
algebraic stacks
these description are not a by-product of the geometry but of category theory.
Enlarging one's frame of reference to include presheaves of groupoids which
are not stacks also enlarges the range of {\it presentations}
and so the possible alternative descriptions of one's category of sheaves.

The stack with the greatest relevance to 
stable homotopy theory is $\m_{FG}$ the moduli stack of formal groups
(see \cite{G,P,N}).
The Lazard ring provides an atlas $\spec L \ra \m_{FG}$.  But $L$ is 
not noetherian and the maps $\spec L \times_{\m_{FG}} \spec L \ra \spec L$
are not finitely presented.  It follows that $\m_{FG}$ is not an algebraic 
stack and much of the classical literature concerning sheaves on a stack 
does not apply.  

We believe that in the context of problems whose origin is homotopy theory,  
larger classes of {\it presentations} for stacks should naturally appear
and our descent statements will be of use.
 
Finally, as in \cite{H}, we believe that the proper context in which to 
understand stacks is a homotopy theoretic one.  Weak equivalences 
(or $2$-equivalences) of stacks are not homotopy equivalences.  Thus 
one can not work in a naive homotopy category of stacks and behave as 
if these equivalences were isomorphism and the $2$-category pullback were
a real pullback.  The only reasonable way to properly contextualize 
these equivalences and all the constructions one makes taking them 
into account is via a model category structure.  Abstract homotopy theory was
invented precisely to solve these types of problems.

\subsection{Relation to other work} 

Part of the results here are bringing those of \cite{Ho} 
into the homotopy theoretic framework of \cite{H}. 
In \cite{Ho}, Hovey defines quasi-coherent sheaves on a presheaf of groupoids
on $\Aff_{flat}$ and prove a generalized change of rings theorem.
It is a consequence of \ref{char-sheaves}, \ref{modules} that our definition of 
(quasi-coherent) sheaves agrees with Definitions \cite[$1.1,1.2$]{Ho}.
Our Proposition \ref{comodulesqc} then is exactly Theorem A in \cite{Ho}.
Proposition $5.7$ \cite{H} implies that the {\it internal equivalences} 
of (\cite[Definition $3.1$]{Ho}) agree with our local weak equivalences. 
It follows that Propositions \ref{eq-sh}, \ref{cor-qc} are exactly Theorems
B and C of \cite{Ho}, when the site $\cC=\Aff_{flat}$.  
Theorem D \cite{Ho} also follows directly from \cite[Proposition $5.7$]{H}.

\subsection{Acknowledgements} 
The project of understanding stacks from the point of view of homotopy theory was 
inspired by a course by M. Hopkins at M.I.T. and his ideas permeate this work. 
I would also like to thank G. Granja for many helpful comments.This 
research was partially supported by the
Center for Mathematical Analysis, Geometry,
and Dynamical Systems at the Instituto Superior T\'ecnico of the 
Technical
University of Lisbon and the Golda Meir Fellowship Trust at the
Hebrew University of Jerusalem.

\section{Background}

In this section we recall some results from the homotopy theory of categories, groupoids
and presheaves of groupoids from \cite{R,H} and in the course of this fix our notation and
conventions for the rest of the paper.
\subsection{Homotopy theory of categories}
\label{hothcats}

Recall that $\Gr$ has a cofibrantly generated simplicial model category structure in which:
\begin{itemize}  
\item weak equivalences are equivalences of categories, and 
\item fibrations are maps $p:G \ra H$ such that given $\alpha:b \we p(a) \in H$ 
there exists $\beta:c \ra a \in G$ with $p(\beta)=\alpha$.
\item the simplicial structure is inherited via the fundamental groupoid functor
$\fpi$. 
\item The generating trivial cofibration is $\ast \to \fpi\Delta^1$. The generating cofibrations are $\{\ast,\ast\} \to \fpi\Delta^1, B\Z \to \ast, \emptyset \to \ast$.
\end{itemize}

There is also a simplicial model category structure on $\cCat$
in which weak equivalences are equivalences of categories
and fibrations are the maps which have the right lifting 
property with respect to $* \ra \fpi \Delta^1$.
The simplicial structure on $\cCat$ is defined by setting
$\cC \otimes X= \cC \times\fpi X$ and $\cC^X= \cCat(\fpi X,\cC)$.  
For more details see \cite{R}. We will sometimes abuse notation and 
write $\Delta^1$ for $\fpi \Delta^1$.

It follows from \cite[18.1.2, 18.1.8, 18.5.3]{Hi} that we have the 
following explicit formulas for homotopy limits and colimits in $\cCat$.
The homotopy inverse limit of an $I$ diagram of categories $\cC_I$ is 
the equalizer
$$\prod_{ob(I)} \cC(i)^{\fpi(I/i)} \dbra
\prod_{i \ra j \in I} \cC(j)^{\fpi(I/i)}$$
which can also be described as the end of the functors
$\cC(-)$ and $\fpi(I/-)$, see \cite[18.3]{Hi}.
Similarly the homotopy colimit of the diagram is the coequalizer
$$\coprod_{i \ra j \in I} \cC(i) \times \fpi(j/I) \dbra
\coprod_{i \in I} \cC(i)\times \fpi(i/I)$$
or the coend $\cC(-) \otimes_I \fpi(-/I)$.

We can also compute the homotopy (co)limit by taking a cosimplicial 
(simplicial) replacement of our diagram and applying the $\Tot$ 
("geometric realization") functor.
 
As the simplicial structure on $\cCat$ derives from a $\Gr$ enrichment,
it follows from \cite[Theorems 2.9, 2.12]{H} that $\Tot$ is equivalent 
to $\Tot^2$.  The category $\Tot^2(\cC^\bullet)$ of a cosimplicial 
category $\cC^\bullet$, has
\begin{itemize}
\item objects pairs $(x, \alpha)$ where $x$ is an object of $\cC^0$
and $\alpha: d^0x \we d^1x$ is an isomorphism in $\cC^1$ satisfying
$s^0 \alpha = id_x$ and $d^2 \alpha \circ d^0 \alpha = d^1 \alpha$, and
\item morphisms $(x, \alpha) \ra (y, \beta)$ consist of 
$h:x \ra y \in \cC^0$ such that  $\beta \circ d^0 h= d^1 h \circ f$.
\end{itemize}
Using cosimplicial replacement one obtains from this formula a compact
description of an arbitrary homotopy limit.

Similarly a model for the homotopy colimit of a simplicial diagram of 
categories $\cC_\bullet$ is the coend in $\cCat$,
$\cC_\bullet \otimes_{\Delta} \fpi \Delta[-]$, 
which we also refer to as the \emph{geometric realization}, denoted $|\cC_\bullet|$. 
Here too we have a smaller model for $|\cC_\bullet|$ where the 
objects are the objects of $\cC_0$ and the morphisms are generated by
those in $\cC_0$ and the isomorphisms $f_y:d_0y \ra d_1y$ for each 
$y \in \cC_1$, subject to the relations: 
\begin{itemize}
\item $f_{s_0x}=id_x$
\item for $y \llra{g} y' \in \cC_1$, $d_1 g \circ f_y = f_{y'}\circ d_0g$, and
\item for $z \in \cC_2$, $f_{d_2z} \circ f_{d_0 z} = f_{d_1 z}$.
\end{itemize}

The formulas above also give descriptions of homotopy (co)limits in $\Gr$ (note that
the inclusion of $\Gr$ in $\cCat$ preserves limits and colimits).

\subsection{Sites and presheaves}

We will always assume that our sites $\cC$ are small and closed under finite products
and pullbacks. 
We write $P(\cC,\cD)$ for the category of presheaves on $\cC$ with values in a category 
$\cD$ and $P(\cC)=P(\cC,\Set)$.
We abuse notation and identify the objects in $\cC$ with 
the presheaves of sets (or discrete groupoids) they represent.

If $\{U_i \to X\}$ is a cover, we write $U=\coprod_i U_i$ for the coproduct of the
presheaves and $F(U)$ for $\Hom(U,F)= \prod_i F(U_i)$.
$U_\bullet$ is the \emph{nerve of the cover} which 
is the simplicial object obtained by taking iterated fiber products over $X$.
We will sometimes abuse notation and write a cover as $U \to X$.
$|U_\bullet|$ will denote the geometric realization of the simplicial object
in $P(\cC, \Gr)$. Recall that the geometric realization of a simplicial diagram $F_\bullet$
in $P(\cC,\Gr)$ is defined by $|F_\bullet|(Y) = |F_\bullet(Y)|$ (see \cite[Section 2.2]{H}).
In particular, $|U_\bullet|(Y)$ is the groupoid whose objects are $\coprod_i \Hom(Y,U_i)$
and whose isomorphisms are generated by the set $\coprod_{i,j} \Hom(Y, U_i \times_X U_j)$
satisfying the obvious relations (see the previous subsection).

We will consider two different model structures on the category of presheaves of groupoids.
$P(\cC,\Gr)$ will denote the \emph{levelwise model structure} where a map $F \to F'$ is a fibration (weak equivalence) if and only if $F(X) \to F'(X)$ is a fibration (weak equivalence) in $\Gr$. 
We will write $P(\cC,\Gr)_L$ for the \emph{local model structure} which is the localization
of $P(\cC,\Gr)$ with respect to the maps 
$$\{|U_\bullet| \to X \in P(\cC,\Gr)\}$$ 
where $\{U_i\to X\}$ is a cover in $\cC$. $F \in P(\cC,\Gr)_L$ is fibrant
iff $F(X) \to \holim_\Delta F(U_\bullet)$ is an equivalence of groupoids for all covers 
$\{U_i \to X\}$, i.e. iff $F$ is a stack (see \cite{H}). The stack condition is a direct
generalization of the sheaf condition since a presheaf of sets $F$ is a sheaf if and only
if $F(X) \to \lim_\Delta F(U_\bullet)$ is an isomorphism for all covers $\{U_i \to X\}$.

Note that, by definition of localization,
the cofibrations and trivial fibrations in $P(\cC,\Gr)_L$ are the same as those in $P(\cC,\Gr)$.
Unless otherwise noted, when we say a map of presheaves of groupoids $F\to G$ is a fibration or
weak equivalence we mean in the local model structure.

$P(\cC,\Gr)$ is enriched with tensor and cotensor over $\Gr$ in the obvious way and
therefore also over $\sSet$. Moreover, with this enrichment $P(\cC,\Gr)$ and $P(\cC,\Gr)_L$
are simplicial model categories (see \cite{H}).

Note that the geometric realization of a simplicial groupoid can be constructed by a finite sequence of pushouts along cofibrations and so $|F_\bullet|$ is cofibrant in $P(\cC,\Gr)$ so long as $F_0$, $F_1$ and $F_2$ are. In particular, if $\{U_i \to X\}$ is a cover, $|U_\bullet|$
is always cofibrant.

We say that a levelwise fibration $F \fib F'$ in $P(\cC,\Gr)$ has {\it discrete fibers}
if for each $X \in \cC$ the fiber of $F(X) \ra F'(X)$ over each object $a\in F'(X)$ 
is a discrete groupoid (i.e. a groupoid with only identity morphisms).

$\Hom_{P(\cC,\Gr)}(F,G)$ denotes the groupoid of maps between two presheaves of groupoids.
We write $h\Hom(A,B)$ for the homotopy function complex of maps between two objects
$A$ and $B$ in a model category. 

We will use repeatedly the following basic result \cite[Theorem 5.7]{H} characterizing the
weak equivalences in $P(\cC,\Gr)_L$ as those maps $F \llra{\phi} G \in P(\cC,\Gr)$ which satisfy the \emph{local lifting conditions}: 
\begin{enumerate}
\item
Given a commutative square
\[
\xymatrix{ \emptyset \ar[r] \ar[d] & F(X) \ar[d] \\ \star \ar[r] & G(X)}
\]
there exists a cover $U \ra X$ and lifts in the diagram as follows

\[
\xymatrix{\star \ar@/^3ex/@{-->}[rrr] \ar[d] & \emptyset \ar[l] \ar[r]
\ar[d] &  F(X) \ar[d] \ar[r] & F(U) \ar[d] \\
\Delta^1 \ar@/_3ex/@{-->}[rrr] & \star \ar[l] \ar[r] & G(X)\ar[r] &
G(U).} 
\]
\\
\item
For $A \ra B$, one of the generating cofibrations in $\Gr$ (see \cite[Section $2.1$]{H})
$\partial \Delta^1=\{\star,\star\} \ra \Delta^1$ or  $B\Z \ra \star,$ 
given a commutative square
\[
\xymatrix{ A \ar[r] \ar[d] & F(X) \ar[d] \\ B \ar[r] & G(X)}
\]
there exists a cover $U \ra X$ and a lift in the diagram as follows

\[
\xymatrix{A  \ar[r] \ar[d] &  F(X) \ar[d] \ar[r] & F(U) \ar[d] \\
B \ar[r] \ar@{-->}[rru] & G(X) \ar[r] & G(U).} 
\]
\end{enumerate}
Note that condition (1) means that $F \to G$ is locally essentially surjective while condition (2) 
says that $F \to G$ is locally full and faithful.

\section{Presheaves on a Stack}

In this section we associate a site to a presheaf of groupoids $\m$ 
and prove an equivalence of categories between presheaves of groupoids 
on this site and the full subcategory $(P(\cC,\Gr)/\m)_{df}$ of the over category 
consisting of levelwise fibrations with discrete fiber.

\subsection{Grothendieck Topology on $\m$}

The site we define is a simple generalization of the 
one first defined by Deligne and Mumford in \cite[Definition $4.10$]{DM}.

\begin{defn}
Let $\m$ be presheaf of groupoids on $\cC$ and let $\cC/\m$ denote the 
category whose 
\begin{itemize} 
\item objects are pairs $(X,f)$ where $X\in \cC$ and $X \llra{f} \m \in P(\cC,\Gr)$,
\item morphisms from $X \llra{f} \m$ to $X' \llra{g} \m$ are pairs 
$(h, \alpha)$ where $X \llra{h} X'$ and $\alpha$ is a homotopy 
$f \ra g \circ h$.  
\end{itemize}
\end{defn}

\begin{remark}
\begin{enumerate}[(a)]
\item Given maps $f,f':X \ra \m$, a homotopy $\alpha: f \ra f'$ 
determines an isomorphism in $\cC/\m$ between the objects $f$ and $f'$ and
 so a presheaf $F$ on $\cC/\m$ will satisfy $F(X,f)\cong F(X,f')$. 
\item The category $\cC/\m$ is just the Grothendieck construction 
on the functor $\m$, i.e. the category whose objects are pairs $(X,a)$ with $X \in \cC$
and $a \in \m(X)$ and morphisms defined in the obvious way.

Notice that the projection $\cC/\m \to \cC$ is $p\m$, the category fibered in groupoids associated to $\m$ \cite[Definition 3.11]{H}. 
\end{enumerate}
\end{remark}

\begin{lemma}
Let $(g, \alpha): (Y,g') \ra (X,f)$ and
$(h,\beta): (Z,h') \ra (X,f)$ be maps in $\cC/\m$.
The pullback of the maps $(g,\alpha)$ and $(h,\beta)$
in $\cC/\m$ is
$$(Y \times_X Z, f \circ (g \times h))$$ 
where  $g\times h$ denotes the canonical map $Y \times_X Z \llra{g\times h} X$. The
projection maps are $(p_Y, \alpha^{-1})$ and $(p_Z, \beta^{-1})$.
\end{lemma}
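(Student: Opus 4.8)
The plan is to verify directly that the object $(Y\times_X Z,\, f\circ(g\times h))$, equipped with the two stated projections, satisfies the universal property of a pullback in $\cC/\m$. Throughout I use three facts. A morphism $(k,\gamma)\colon (A,\phi)\to(B,\psi)$ in $\cC/\m$ consists of an underlying arrow $k$ of $\cC$ together with a homotopy $\gamma\colon \phi\to\psi\circ k$; these homotopies are isomorphisms in the relevant groupoids and hence invertible; and the composite of $(k,\gamma)$ followed by $(l,\delta)$ has underlying arrow $l\circ k$ and homotopy $(\delta\ast k)\circ\gamma$, where $\ast$ denotes whiskering by $k$.

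First I would check that the two projections are well defined and that the square commutes. Since $Y\times_X Z$ is the pullback in $\cC$, the composites $g\circ p_Y$ and $h\circ p_Z$ both coincide with the canonical arrow $g\times h\colon Y\times_X Z\to X$, so $f\circ(g\times h)$ is a genuine object of $\cC/\m$. Whiskering the inverse homotopy $\alpha^{-1}\colon f\circ g\to g'$ by $p_Y$ yields a homotopy $f\circ(g\times h)=f\circ g\circ p_Y\to g'\circ p_Y$, which is exactly the datum of a morphism $(p_Y,\alpha^{-1})\colon(Y\times_X Z,f\circ(g\times h))\to(Y,g')$; the same works for $(p_Z,\beta^{-1})$. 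Composing $(p_Y,\alpha^{-1})$ with $(g,\alpha)$ then gives underlying arrow $g\times h$ and homotopy $(\alpha\ast p_Y)\circ(\alpha^{-1}\ast p_Y)=(\alpha\circ\alpha^{-1})\ast p_Y=\mathrm{id}$, and the analogous computation with $\beta$ shows that composing $(p_Z,\beta^{-1})$ with $(h,\beta)$ also yields $(g\times h,\mathrm{id})$, so both composites to $(X,f)$ agree.

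For the universal property, suppose $(u,\mu)\colon(T,t)\to(Y,g')$ and $(v,\nu)\colon(T,t)\to(Z,h')$ satisfy $(g,\alpha)\circ(u,\mu)=(h,\beta)\circ(v,\nu)$. Unwinding, this equality says precisely that $g\circ u=h\circ v$ in $\cC$ and that $(\alpha\ast u)\circ\mu=(\beta\ast v)\circ\nu$. The first identity lets the universal property of the pullback $Y\times_X Z$ in $\cC$ produce a unique arrow $w\colon T\to Y\times_X Z$ with $p_Y\circ w=u$ and $p_Z\circ w=v$. I then set $\xi:=(\alpha\ast u)\circ\mu$, which by the second identity equals $(\beta\ast v)\circ\nu$ as well, and claim $(w,\xi)$ is the desired factorization: a short computation using $\alpha^{-1}\circ\alpha=\mathrm{id}$ and $\beta^{-1}\circ\beta=\mathrm{id}$ shows $(p_Y,\alpha^{-1})\circ(w,\xi)=(u,\mu)$ and $(p_Z,\beta^{-1})\circ(w,\xi)=(v,\nu)$. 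Uniqueness follows because $w$ is already the unique such arrow in $\cC$, while the first of these equations forces $(\alpha^{-1}\ast u)\circ\xi=\mu$, whence $\xi=(\alpha\ast u)\circ\mu$ after inverting $\alpha$.

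The only real obstacle here is bookkeeping: keeping the directions of all homotopies consistent and applying whiskering in the correct order, which is where sign-type errors tend to creep in. The underlying mathematical content is simply that the diagram of underlying arrows is a pullback in $\cC$ and that every $2$-cell involved is invertible, so each required compatibility is automatically solvable and solvable in exactly one way.
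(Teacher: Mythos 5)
Your proof is correct: the lemma is stated in the paper without proof (it is treated as a routine verification preceding Proposition \ref{top-on-stack}), and your direct check of the universal property in $\cC/\m$ — using that homotopies are invertible, that $g\times h = g\circ p_Y = h\circ p_Z$, and that whiskering commutes with vertical composition — is exactly the argument the paper leaves to the reader. The bookkeeping of directions (e.g.\ $(\alpha^{-1}\ast u)\circ\xi=\mu$ forcing $\xi=(\alpha\ast u)\circ\mu$) is handled correctly, including the uniqueness of the factorization.
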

Using the previous lemma, the proof of the following proposition is an easy exercise.
\begin{prop}  
\label{top-on-stack}
Let $\cC$ be a site and $\m \in P(\cC,\Gr)$.
The collections of morphisms which forget to covers in $\cC$ form the 
basis for a Grothendieck topology on $\cC/\m$.
\end{prop}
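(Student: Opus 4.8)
The plan is to verify the three axioms of a Grothendieck basis (pretopology) on $\cC/\m$ --- that every isomorphism is a covering, that coverings are stable under base change, and that coverings are transitive --- by reducing each one to the corresponding axiom for the topology already assumed on $\cC$. The organizing principle is that the forgetful functor $\cC/\m \ra \cC$, $(X,f)\assign X$, $(h,\alpha)\assign h$ (the projection of the Grothendieck construction), carries exactly the data relevant to each axiom to its analogue in $\cC$, and that a family in $\cC/\m$ is by \emph{definition} a covering iff its image under this functor is a covering in $\cC$. Recall also that $\cC$ is closed under pullbacks and that the previous lemma guarantees the needed fibered products exist in $\cC/\m$.

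For the isomorphism axiom I would observe that a morphism $(h,\alpha)$ is an isomorphism in $\cC/\m$ precisely when $h$ is an isomorphism in $\cC$ (the homotopy $\alpha$ is automatically invertible, living in a groupoid). Since an isomorphism is a covering in the topology on $\cC$, the singleton family $\{(h,\alpha)\}$ forgets to a cover and is therefore a covering in $\cC/\m$.

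For base change, let $\{(g_i,\alpha_i)\colon (Y_i,g_i') \ra (X,f)\}$ be a covering and let $(h,\beta)\colon (Z,h') \ra (X,f)$ be an arbitrary morphism. By the previous lemma the pullback of $(g_i,\alpha_i)$ along $(h,\beta)$ has underlying object $Y_i \times_X Z$, and its projection to $(Z,h')$ is $(p_Z,\beta^{-1})$, whose underlying map is the usual projection $p_Z\colon Y_i \times_X Z \ra Z$. Hence the base-changed family forgets to $\{p_Z\colon Y_i \times_X Z \ra Z\}$, which is precisely the base change in $\cC$ of the cover $\{g_i\colon Y_i \ra X\}$ along $h$, and so is a cover because $\cC$ is a site. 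For transitivity I would use that the forgetful functor preserves composition: a composite of covering families in $\cC/\m$ forgets to the composite of the corresponding covers in $\cC$, which is a cover by transitivity of the topology on $\cC$.

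The only step that is not purely formal is base change, because it requires knowing how pullbacks are computed in $\cC/\m$; but that computation is exactly the content of the preceding lemma, so the main obstacle is dispatched before the argument begins. What remains is the routine bookkeeping of matching each axiom to its counterpart in $\cC$ through the forgetful functor, which confirms the assertion that this is an easy exercise.
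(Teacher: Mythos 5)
Your proof is correct and is exactly the ``easy exercise'' the paper has in mind: the paper gives no written proof beyond citing the pullback lemma, and your verification of the three pretopology axioms --- reducing each to the corresponding axiom in $\cC$ via the forgetful functor, with the lemma supplying the computation of pullbacks for the base-change axiom --- is the intended argument.
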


\begin{remark}
The site of Proposition \ref{top-on-stack} generalizes the \'etale site \cite[4.10]{DM} of a Deligne-Mumford stack $\m$ which has
\begin{itemize} 
\item objects the schemes \'etale over $\m$, and 
\item morphisms triangles with a commuting homotopy, and 
\item covers those morphisms which forget to \'etale covers.
\end{itemize}
The Deligne-Mumford site is $\cC/\m$ when we take $\cC$ to be the 
category of schemes and \emph{\'etale maps} between them. If we 
take $\cC$ to be the category of schemes and all maps between them,
and $\m$ to be the sheaf represented by a scheme, the site of Proposition \ref{top-on-stack} is strictly bigger. It is called the big \'etale site of $\m$ \cite[II.3.3]{Ta}.

The sites that arise through the construction above are always over categories. This rules
out some examples such as the smooth-\'etale site of an algebraic stack \cite[Definition 12.1]{LM-B}.
\end{remark}

\begin{defn}
Let $T$ be a site. We say a collection of covers $S$ \emph{generates the topology} on $T$
when a presheaf $F$ on $T$ is a sheaf if and only if it satisfies
the sheaf condition when applied to a cover in $S$.
\end{defn}

\begin{prop} \label{covers} 
The collection of covers of the form
$$\{(U_i, f \circ u_i) \xrightarrow{(u_i,id)} (X, f)\}$$
generate the topology on $\cC/\m$.
\end{prop}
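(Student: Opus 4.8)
The plan is to unwind the definition of ``generates the topology'': I must show that a presheaf $F$ on $\cC/\m$ is a sheaf if and only if it satisfies the sheaf condition for every cover of the special form $\{(U_i, f\circ u_i) \xrightarrow{(u_i,\mathrm{id})} (X,f)\}$. One implication is immediate, since these special covers are genuine covers in the topology of Proposition \ref{top-on-stack}, so any sheaf satisfies their sheaf condition. The content is the converse: assuming $F$ satisfies the sheaf condition for all special covers, I must verify it for an arbitrary cover $\{(Y_i,g_i) \xrightarrow{(h_i,\alpha_i)} (X,f)\}$, i.e. any collection whose underlying maps $\{h_i\colon Y_i \to X\}$ cover $X$ in $\cC$. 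My strategy is to show that each such cover is isomorphic, as a diagram over $(X,f)$, to a special cover with the \emph{same} double-intersection terms, so that the two sheaf-condition equalizers are isomorphic.

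The key observation is that the homotopy $\alpha_i\colon g_i \to f\circ h_i$ built into each morphism is exactly the data that, by part (a) of the Remark following the definition of $\cC/\m$, produces an isomorphism $\phi_i = (\mathrm{id}_{Y_i}, \alpha_i)\colon (Y_i, g_i) \to (Y_i, f\circ h_i)$ in $\cC/\m$. A short composition check shows that the original morphism factors as $(h_i,\alpha_i) = (h_i, \mathrm{id}) \circ \phi_i$, so the arbitrary cover differs from the special cover $\{(Y_i, f\circ h_i) \xrightarrow{(h_i,\mathrm{id})} (X,f)\}$ precisely by the objectwise isomorphisms $\phi_i$.

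Next I would invoke the pullback formula of the earlier lemma, whose punchline is that the fiber product of $(h_i,\alpha_i)$ and $(h_j,\alpha_j)$ is $(Y_i\times_X Y_j, f\circ(h_i\times h_j))$, an object depending only on $h_i,h_j$ and $f$ and \emph{not} on the homotopies. Hence the double intersections of the arbitrary cover and of the special cover literally coincide, and I only need to compare the single-index terms. Applying $F$ to the factorization $(h_i,\alpha_i) = (h_i,\mathrm{id})\circ \phi_i$ and to the analogous statement for the two projections gives a commuting ladder exhibiting $\prod_i F(\phi_i)$ as an isomorphism between the equalizer diagram of the special cover and that of the arbitrary cover, over the identity on $F(X,f)$ and on the intersection term. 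Since the special equalizer is an equalizer by hypothesis, so is the general one, and $F$ satisfies the arbitrary cover's sheaf condition; as the cover was arbitrary, $F$ is a sheaf.

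The main obstacle, and the only place where care is needed, is the compatibility of the isomorphisms $\phi_i$ with the restriction maps into the intersections. The projections furnished by the pullback lemma carry the inverse homotopies $\alpha_i^{-1}$ (suitably whiskered with $p_{Y_i}$), so verifying $\phi_i \circ (p_{Y_i}, \alpha_i^{-1}) = (p_{Y_i}, \mathrm{id})$ requires composing homotopies and watching the $\alpha_i$ cancel against $\alpha_i^{-1}$. This is a routine $2$-categorical diagram chase, but it is the step that actually uses the precise form of the projection maps recorded in the lemma, and it is what upgrades the comparison to a genuine isomorphism of equalizer diagrams rather than merely a termwise isomorphism.
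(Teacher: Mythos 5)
Your proof is correct and takes essentially the same route as the paper's: factor each map of an arbitrary cover through the objectwise isomorphism $(\mathrm{id},\alpha_i)$ onto the corresponding special cover, observe via the pullback lemma that the double-intersection terms coincide, and compare the two sheaf-condition equalizers by a commuting ladder of diagrams. The paper's argument is simply a more compressed version of yours, recording this ladder in a single diagram.
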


\begin{proof}
Consider an arbitrary cover $\{(U_i, f_i) \xrightarrow{(u_i,\alpha_i)} (X,f)\}$.
We may factor these maps as
$$(U_i, f_i) \xrightarrow{(id,\alpha_i)} (U_i, f \circ u_i) \xrightarrow{(u_i,id)} (X,f).$$
The first map is an isomorphism. If $F$ is a presheaf, the sheaf condition applied 
to the original cover requires that the top row in the following diagram be an equalizer
while the sheaf condition applied to $\{(U_i, f \circ u_i) \xrightarrow{(u_i,id)} (X,f)\}$
requires that the bottom row be an equalizer.
$$\xymatrix{ F(X,f) \ar[r] \ar[d]^= & \bigl( \prod F(U_i,f_i) \ar@2[r] 
\ar[d]^{F(id,\alpha_i)} & \prod F(U_i \times_X U_j, f \circ (u_i \times u_j)) 
\bigr) \ar[d]^= \\
F(X,f) \ar[r] &  \bigl( \prod F(U_i, f \circ u_i) \ar@2[r] 
&  \prod F(U_i \times_X U_j, f \circ (u_i \times u_j)) \bigr)}$$
Since the above diagram commutes these two conditions are equivalent.
\end{proof}

\subsection{Presheaves on $\m$}

We will now define an equivalence of categories between $P(\cC/\m)$ and 
the full subcategory of $P(\cC,\Gr)/\m$ consisting of levelwise fibrations
with discrete fibers, which we denote by $(P(\cC,\Gr)/\m)_{df}$.

\begin{defn}
Given $G \in P(\cC/\m)$, let $BG \in P(\cC, \Gr)$ be the presheaf so that
$BG(X)$ is the groupoid whose objects are pairs $(a, s)$ where $a \in \m(X)$
and $s \in G(X,a)$.  A morphism $(a,s) \ra (b,s')$ is a morphism
$a \llra{\alpha} b \in \m(X)$ such that $s= \alpha^*s'$.
\end{defn}
Alternatively, $BG(X)$ is the Grothendieck construction on the restriction of 
the functor $G$ to the subcategory $\m(X)$ of $\cC/\m$. The proof of the following
lemma is an easy exercise.
\begin{lemma}
The natural projection $BG \ra \m$ is a levelwise fibration with 
discrete fibers.
Moreover the fiber in $BG(X)$ over $a \in \m(X)$ is the set $G(X,a)$.
\end{lemma}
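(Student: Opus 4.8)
The plan is to verify the three assertions directly from the definitions, working levelwise. The projection $\pi\colon BG \to \m$ sends an object $(a,s)$ of $BG(X)$ to $a \in \m(X)$, and sends a morphism $(a,s) \to (b,s')$ — which by definition is a morphism $\alpha\colon a \to b$ in $\m(X)$ satisfying $s = \alpha^* s'$ — to $\alpha$ itself. This assignment is evidently natural in $X$, so it suffices to prove that for each $X \in \cC$ the functor $\pi_X\colon BG(X) \to \m(X)$ is a fibration in $\Gr$ with discrete fibers, and to identify the fiber over each object $a$.

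First I would check the fibration condition. Recall that $\pi_X$ is a fibration precisely when, for every object $(a,s)$ of $BG(X)$ and every morphism $\alpha\colon b \to a$ in $\m(X)$, there is a lift $\beta$ with codomain $(a,s)$ and $\pi_X(\beta) = \alpha$. Since $G$ is a presheaf on $\cC/\m$, the morphism $\alpha$ induces $\alpha^*\colon G(X,a) \to G(X,b)$; setting $s'' := \alpha^* s \in G(X,b)$, the relation $s'' = \alpha^* s$ is exactly what exhibits $\alpha$ as a morphism $\beta\colon (b,s'') \to (a,s)$ of $BG(X)$, and by construction $\pi_X(\beta) = \alpha$. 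This supplies the required lift.

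Next I would compute the fiber of $\pi_X$ over a fixed $a \in \m(X)$. Its objects are the pairs $(a,s)$ with $s \in G(X,a)$, and a morphism between two such objects lying over $\mathrm{id}_a$ must have underlying map $\mathrm{id}_a$ in $\m(X)$; the defining relation then reads $s = \mathrm{id}_a^* s' = s'$, so the only such morphisms are identities. Hence the fiber is the discrete groupoid on the set $G(X,a)$, which simultaneously shows that the fibers are discrete and identifies the fiber over $a$ with $G(X,a)$.

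The content is almost entirely bookkeeping, and the only point demanding care is the variance. Because $G$ is contravariant and a morphism of $BG(X)$ is encoded by the relation $s = \alpha^* s'$ pointing \emph{against} $\alpha$, one must ensure the lift furnished in the fibration condition runs in the correct direction — that $\alpha\colon b \to a$ is covered by a morphism \emph{into} $(a,s)$ whose source is governed by $\alpha^* s$, rather than the reverse. Once this is arranged the verification is immediate, in keeping with the lemma being stated as an easy exercise.
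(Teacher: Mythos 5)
Your proof is correct: the paper leaves this lemma as an easy exercise, and your direct levelwise verification (constructing the lift $(b,\alpha^*s)\to(a,s)$ for the fibration condition, and checking that morphisms in the fiber over $a$ must satisfy $s=\mathrm{id}_a^*s'=s'$) is exactly the intended argument, with the variance of $\alpha^*$ handled correctly.
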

It is easy to check that $B$ defines a functor from $P(\cC/\m)$ to 
$(P(\cC,\Gr)/\m)_{df}$. 

A fibration $\pi\colon G \to H$ of groupoids with discrete fibers satisfies 
unique path lifting and so the assignment $a \mapsto \pi^{-1}(a)$ for $a \in \ob H$
defines a functor from $H^{op}$ to $\Set$.
Using this it is easy to see that the following definition makes sense.
\begin{defn}
The functor $\Gamma \colon (P(\cC,\Gr)/\m)_{df} \ra P(\cC/\m)$ is defined on objects 
by $\Gamma(F \llra{\pi} \m)(X,a) = \pi_X^{-1}(a)$ where $\pi_X^{-1}(a)$ denotes the fiber 
in $F(X)$ over $a \in \m(X)$. 
\end{defn}

\begin{prop} 
The pair $(B,\Gamma)$ is an adjoint equivalence of categories.
\end{prop}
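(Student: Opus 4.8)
The plan is to establish that $B$ and $\Gamma$ are mutually inverse functors, which in the context of an adjunction amounts to showing that the unit and counit are natural isomorphisms. First I would verify that both composites are naturally isomorphic to the identity by direct computation on objects, which is the most transparent route. Start with $\Gamma B$: given $G \in P(\cC/\m)$, the presheaf $BG$ has $BG(X)$ the Grothendieck construction on $G|_{\m(X)}$, and by the preceding lemma its fiber over $a \in \m(X)$ is exactly the set $G(X,a)$. Thus $\Gamma(BG \to \m)(X,a) = G(X,a)$, and one checks this identification is natural in $(X,a)$, giving a natural isomorphism $\Gamma B \cong \Id$.

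For the other composite $B\Gamma$, take a levelwise fibration $\pi\colon F \to \m$ with discrete fibers. The functor $\Gamma(F)$ sends $(X,a)$ to the fiber $\pi_X^{-1}(a) \subseteq \ob F(X)$, and then $B\Gamma(F)(X)$ is the groupoid whose objects are pairs $(a,s)$ with $a \in \m(X)$ and $s \in \pi_X^{-1}(a)$. The natural comparison map sends such a pair to the object $s \in F(X)$ (which satisfies $\pi_X(s) = a$), and its inverse sends $t \in \ob F(X)$ to the pair $(\pi_X(t), t)$. The crucial point is that this is an \emph{isomorphism of groupoids}, not merely a bijection on objects: here is where I would use that $\pi$ has discrete fibers and is a fibration. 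Unique path lifting (noted just before the definition of $\Gamma$) guarantees that a morphism $s \to s'$ in $F(X)$ is determined uniquely by the induced morphism $\pi_X(s) \to \pi_X(s')$ in $\m(X)$ together with the requirement that it lift to one with prescribed source, so morphisms of $F(X)$ correspond bijectively to morphisms $\alpha\colon a \to b$ in $\m(X)$ with $s = \alpha^* s'$, which is precisely the morphism set in $B\Gamma(F)(X)$.

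The main obstacle, and the step deserving the most care, is the verification that $B\Gamma(F) \to F$ is full and faithful on morphisms, i.e.\ that the discrete-fiber and fibration hypotheses together force morphisms in $F(X)$ to be exactly the "formal lifts" recorded by the Grothendieck construction. Faithfulness comes from discreteness of the fibers (two lifts of the same morphism in $\m(X)$ agreeing on objects must coincide), while fullness/existence comes from the path-lifting property of the fibration. I would isolate this as the technical heart, checking it levelwise in $X$ and then noting naturality in $\cC$ is automatic since all constructions are defined objectwise and commute with the restriction maps of the presheaves.

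Finally, having produced the two natural isomorphisms $\Gamma B \cong \Id$ and $B\Gamma \cong \Id$, I would confirm that they satisfy the triangle identities so that $(B,\Gamma)$ is genuinely an adjoint equivalence rather than merely an equivalence; since both functors and both natural transformations are given by such explicit formulas, the triangle identities reduce to a routine diagram chase on objects and morphisms, which I would state can be checked directly without difficulty.
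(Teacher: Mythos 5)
Your proposal is correct and follows essentially the same route as the paper: both exhibit the unit $G \to \Gamma BG$, $s \mapsto (a,s)$, and the counit $B\Gamma H \to H$, $(a,s) \mapsto s$, as explicit natural isomorphisms, with unique path lifting (fibration plus discrete fibers) accounting for the morphism-level bijection. Your write-up simply makes explicit the details the paper leaves implicit, namely the full-and-faithfulness check and the triangle identities.
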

\begin{proof}
There is a natural isomorphism $G \ra \Gamma BG$
which when evaluated at $(X,a)$ sends an element
$s \in G(X,a)$ to the element $(a,s)$ in $BG(X)$ lying over $a$.
Given $H \in (P(\cC,\Gr)/\m)_{df}$,
an element of $B \Gamma H(X)$ is a pair $(a,s)$ where $a \in \m(X)$
and $s$ is in the fiber of $H(X)$ over $a$. 
Sending $(a,s)$ to $s \in H(X)$ defines a natural isomorphism
$B\Gamma H \ra H$ over $\m$.
\end{proof}

\section{Sheaves}

In this section we identify sheaves on $\m$ with the category 
of fibrations $\cN \fib \m$ in $P(\cC,\Gr)_L$ with objectwise discrete fiber,
which we denote by $(P(\cC,\Gr)_L/\m)_{df}$.
This homotopy theoretic characterization of
the sheaves on a presheaf of groupoids allows us to prove invariance under 
weak equivalence.  We also extend these results to the categories of sheaves 
of rings and sheaves of quasi-coherent modules.

\begin{prop} The functors $(B,\Gamma)$ restrict to give an equivalence  
of categories between $Sh(\cC/\m)$ and $(P(\cC,\Gr)_L/\m)_{df}$.
\end{prop}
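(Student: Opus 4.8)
The plan is to leverage the adjoint equivalence $(B,\Gamma)$ of the previous proposition. Since $Sh(\cC/\m)$ is a full subcategory of $P(\cC/\m)$ and $(P(\cC,\Gr)_L/\m)_{df}$ is a full subcategory of $(P(\cC,\Gr)/\m)_{df}$, it suffices to show that these two full subcategories correspond to one another under $B$ and $\Gamma$. Concretely, for $G \in P(\cC/\m)$ I would prove that $G$ is a sheaf if and only if the levelwise fibration $BG \to \m$ is a fibration in the local structure $P(\cC,\Gr)_L$. Every object of $(P(\cC,\Gr)_L/\m)_{df}$ is a local, hence levelwise, fibration with discrete fibers, so is of the form $BG$ with $G=\Gamma F$; thus this single equivalence pins down both directions.

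First I would characterize local fibrations among levelwise fibrations. Because $P(\cC,\Gr)_L$ is the left Bousfield localization of the (cofibrantly generated) levelwise structure at the maps $\{|U_\bullet| \to X\}$, a levelwise fibration $F \to \m$ is a local fibration exactly when it has the right lifting property against the generating local trivial cofibrations; these may be taken to be the levelwise generating trivial cofibrations (against which $F\to\m$ lifts automatically) together with the maps built from the localizing maps $|U_\bullet| \to X$ by pushout-product with the generating cofibrations $\emptyset \to \ast$, $\{\ast,\ast\}\to\fpi\Delta^1$, and $B\Z \to \ast$ of $\Gr$. The essential condition comes from $\emptyset \to \ast$, namely the right lifting property against $|U_\bullet| \to X$ itself.

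Next I would unwind this lifting condition using discreteness of the fibers. Maps over $\m$ out of $X$ with structure map $f \in \m(X)$ into $F=BG$ are exactly the fiber $\pi_X^{-1}(f) = G(X,f)$, while, $|U_\bullet|$ being the geometric realization of the nerve $U_\bullet$, the enriched mapping groupoid computes by the totalization formulas of Section 2.1 as $\Hom_{P(\cC,\Gr)}(|U_\bullet|,F) \cong \holim_\Delta F(U_\bullet)$; restricting over $\m$ gives the fiber of $\holim_\Delta F(U_\bullet) \to \holim_\Delta \m(U_\bullet)$ over the canonical descent datum of $f$. Here discreteness is decisive: in the $\Tot^2$ description an object of this homotopy limit is a family $s_i$ together with isomorphisms between restrictions satisfying the cocycle condition, but since the fibers are discrete these isomorphisms must be identities, collapsing the homotopy limit to the strict equalizer $\eq\bigl(\prod_i G(U_i, f\circ u_i) \dbra \prod_{i,j} G(U_i\times_X U_j, f\circ(u_i\times u_j))\bigr)$. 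Combined with the unique path lifting enjoyed by fibrations with discrete fibers, the right lifting property against $|U_\bullet|\to X$ therefore says precisely that $G(X,f)\to\eq(\cdots)$ is a bijection, i.e. the sheaf condition for the cover $\{(U_i, f\circ u_i)\xrightarrow{(u_i,id)}(X,f)\}$. The remaining pushout-product conditions (from $\{\ast,\ast\}\to\fpi\Delta^1$ and $B\Z\to\ast$) concern morphisms and automorphisms inside the fibers and so hold automatically once the fibers are discrete. By Proposition \ref{covers} these covers generate the topology, so $BG\to\m$ is a local fibration if and only if $G$ is a sheaf, giving the desired correspondence.

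I expect the main obstacle to be the combination of the localization bookkeeping in the second step with the homotopy-limit computation in the third: justifying that, among levelwise fibrations, local fibrancy is detected by lifting against the localizing maps and their pushout-products (a statement about the generating trivial cofibrations of the Bousfield localization), and verifying carefully that discreteness of the fibers collapses $\holim_\Delta F(U_\bullet)$ to the \emph{strict} equalizer rather than merely to a homotopy-equivalent groupoid. Everything else is formal, resting on the already-established adjoint equivalence and on Proposition \ref{covers}.
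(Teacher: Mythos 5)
Your reduction is the same as the paper's: via Proposition \ref{covers} you rewrite the sheaf condition for $G$ as the statement that $\Hom_{P(\cC,\Gr)/\m}(-,BG)$ takes each $|U_\bullet| \to X$ to an isomorphism, and you correctly note that discreteness of the fibers makes the strict equalizer agree with the homotopy limit, so that $G$ is a sheaf exactly when $BG \to \m$ is local with respect to the maps $|U_\bullet| \to X$ (what the paper calls descent for covers). The genuine gap is in the step you yourself flag as the ``main obstacle'': the claim that, because $P(\cC,\Gr)_L$ is a left Bousfield localization, its generating trivial cofibrations ``may be taken to be'' the levelwise generating trivial cofibrations together with the pushout-products of the localizing maps $|U_\bullet| \to X$ with the generating cofibrations of $\Gr$. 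This is not localization bookkeeping, and it does not follow from general theory: Hirschhorn produces generating trivial cofibrations for a localization only by an abstract cardinality argument, and lifting/fibration statements in a localization are available in general only over \emph{fibrant} (i.e.\ local) bases. Here $\m$ is an arbitrary presheaf of groupoids, not a stack, so no general theorem identifies local fibrations over $\m$ with maps having RLP against your explicit set. Characterizing them is precisely the content of the paper's Proposition \ref{charac-fib}, which your proposal in effect asserts without proof.

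The paper closes this gap with a DHI-style argument: it forms the set $J$ using the factorization $|U_\bullet| \cof \tilde X \we X$ (the cofibrant replacement $\tilde X$ is needed because $|U_\bullet| \to X$ is typically not a cofibration, so the corner maps you build directly from it would not even be cofibrations), shows that $J$-injectivity is equivalent to being a levelwise fibration satisfying descent for covers, and then---this is the real work---invokes \cite[Lemma 7.3]{DHI}, whose hypothesis requires verifying that every levelwise fibration satisfying descent for covers which is also a local weak equivalence is a levelwise \emph{trivial} fibration. That verification (fullness, faithfulness and essential surjectivity levelwise, argued via the local lifting conditions) occupies the bulk of the proof of Proposition \ref{charac-fib} and has no analogue in your proposal. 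Your remaining steps---collapsing the homotopy limit to the strict equalizer using discreteness, and disposing of the morphism-level lifting conditions---are sound and agree with the paper's use of discreteness, but without an argument for the characterization of fibrations the proof is incomplete at its decisive point.
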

\begin{proof}
By Proposition \ref{covers} a presheaf $F$ on $\cC/\m$ is a sheaf if and only if 
$$F(X,a) \cong \equ \bigl( \prod F(U_i,a\circ u_i) \dbra 
\prod F(U_{ij},a \circ u_{ij}) \bigr)$$
for all covers $\{U_i \llra{u_i} X\}$ and $a \in \m(X)$.
Since 
$$F(X,a)=\Hom_{P(\cC,\Gr)/\m}(X \llra{a} \m, BF\ra \m)$$ 
the sheaf condition can be rewritten as
$$\Hom_{P(\cC,\Gr)/\m}(X,BF) \cong \lim_{\Delta} \Hom_{P(\cC,\Gr)/\m}(U_\bullet,BF).$$
As $BF \ra \m$ has discrete fibers, each groupoid of maps into it is 
discrete and therefore the inverse limit of $\Hom_{P(\cC,\Gr)/\m}(U_\bullet,BF)$
agrees with the homotopy inverse limit.

This shows that $F$ is a sheaf if and only if $BF \to \m$ is local with 
respect to the maps $|U_\bullet| \ra X \in P(\cC,\Gr)/\m$
where $U_\bullet$ is the nerve of a cover of $X$.
It follows from the following Proposition that 
this is equivalent to $BF \ra \m$ being a fibration.
\end{proof}

\begin{prop} \label{charac-fib}
A map $\cF \ra \m$ in $P(\cC,\Gr)_L$ is a fibration if and only if it
it is a levelwise fibration and satisfies \emph{descent for covers}, meaning for all covers 
$\{U_i \ra X\}$ in $\cC$, the following is a homotopy pullback square:
$$\xymatrix{ \cF(X) \ar[r] \ar[d] & \holim \cF(U_\bullet) \ar[d] \\
\m(X) \ar[r] & \holim \m(U_\bullet). }$$ 
\end{prop}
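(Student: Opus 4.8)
The plan is to reduce both implications to a single computation with the corner (pushout-product) map of the localizing maps, exploiting that $P(\cC,\Gr)_L$ and $P(\cC,\Gr)$ share the same cofibrations and the same $\Gr$-enrichment, so the pushout-product axiom is available in both. I will use throughout the enriched Yoneda identification $\Hom_{P(\cC,\Gr)}(X,\cF)=\cF(X)$ together with $\Hom_{P(\cC,\Gr)}(|U_\bullet|,\cF)=\holim_\Delta\cF(U_\bullet)$, the latter because $|U_\bullet|$ is the geometric realization of $U_\bullet$ and the hom-groupoid out of a realization is the corresponding $\Tot$/end described in \ref{hothcats}. After factoring $|U_\bullet|\cof C\we X$ in the levelwise structure I may assume the localizing map is a genuine cofibration $s\colon|U_\bullet|\to X$ which is a local weak equivalence, hence a trivial cofibration in $P(\cC,\Gr)_L$; replacing $C$ by $X$ alters every hom-groupoid in sight only up to equivalence, so it does not affect any homotopy-pullback assertion.

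For a levelwise fibration $p\colon\cF\to\m$ I will study the corner map
$$c_s\colon \cF(X)\longrightarrow \holim_\Delta\cF(U_\bullet)\times_{\holim_\Delta\m(U_\bullet)}\m(X),$$
the right-hand side being the strict pullback of groupoids. Since $p$ is a levelwise fibration and $s$ is a cofibration, the pushout-product axiom for the $\Gr$-enrichment of the \emph{levelwise} structure shows that $c_s$ is a fibration of groupoids; and because $\holim_\Delta$ sends the levelwise fibration $\cF(U_\bullet)\to\m(U_\bullet)$ of cosimplicial groupoids to a fibration, this strict pullback is already a homotopy pullback. Hence the descent square for $\{U_i\to X\}$ is a homotopy pullback if and only if $c_s$ is an equivalence, and — as $c_s$ is always a fibration — this holds if and only if $c_s$ is a \emph{trivial} fibration of groupoids.

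Now I will assemble the two directions from this observation. If $p$ is a fibration in $P(\cC,\Gr)_L$ then it lifts against every local trivial cofibration, so in particular it is a levelwise fibration (the levelwise trivial cofibrations are local trivial cofibrations), and the pushout-product axiom in the \emph{local} structure applied to the trivial cofibration $s$ makes $c_s$ a trivial fibration, whence the descent square is a homotopy pullback. Conversely, suppose $p$ is a levelwise fibration satisfying descent. By the previous paragraph each $c_s$ is a trivial fibration of groupoids, which by adjunction says exactly that $p$ lifts against the pushout-products of $s$ with the generating cofibrations $\{\ast,\ast\}\to\fpi\Delta^1$, $B\Z\to\ast$, $\emptyset\to\ast$ of $\Gr$; these, together with the generating levelwise trivial cofibrations (against which $p$ lifts since it is a levelwise fibration), form a generating set of trivial cofibrations for the localization (see \cite{Hi}), so $p$ is a local fibration.

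I expect the main obstacle to be the apparent mismatch between the two sides: descent only furnishes an \emph{equivalence} of groupoids, whereas being a local fibration requires $c_s$ to be an honest trivial fibration, in particular surjective on objects. The genuinely nonformal point is that the levelwise-fibration hypothesis already forces $c_s$ to be a fibration of groupoids, via the pushout-product axiom in the unlocalized structure, so that "equivalence" upgrades automatically to "trivial fibration." The remaining work is bookkeeping: replacing $|U_\bullet|\to X$ by a cofibration, and verifying that the strict pullback defining $c_s$ coincides with the homotopy pullback in the statement, which is precisely where one invokes that $\holim_\Delta$ carries the levelwise fibration $\cF(U_\bullet)\to\m(U_\bullet)$ to a fibration.
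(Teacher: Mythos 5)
Your reduction of descent to the corner map $c_s$ and your proof of the forward implication are sound, and up to that point you are following the same route as the paper: the paper likewise forms a set $J$ consisting of the maps $Z \to Z\times\Delta^1$ together with the pushout-products of $|U_\bullet| \cof \tilde X$ (the cofibrant replacement of $|U_\bullet| \to X$) with the generating cofibrations of $\Gr$, and shows that the right lifting property against $J$ is equivalent to being a levelwise fibration satisfying descent for covers. The genuine gap is the final step of your converse: the claim that these pushout-products, together with the generating levelwise trivial cofibrations, ``form a generating set of trivial cofibrations for the localization (see \cite{Hi})'' is not something you can cite --- it is precisely the hard content of Proposition \ref{charac-fib}. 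For a general left Bousfield localization, having the RLP against pushout-products of the localizing maps with generating cofibrations does \emph{not} imply being a fibration in the localized structure; Hirschhorn's results characterize the $S$-local \emph{fibrant objects} this way, and fibrations only when the codomain is already $S$-local fibrant, and there is no general description of the localized fibrations between arbitrary objects. Each map in your set $J$ is indeed a local trivial cofibration, so local fibrations lift against them (your correct forward direction), but RLP against a \emph{set} of trivial cofibrations makes a map a fibration only if that set generates, which is exactly what is in question.

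What is actually needed, and what the paper spends the second half of its proof doing, is the recognition principle of \cite[Lemma 7.3]{DHI}: $J$ generates the trivial cofibrations of $P(\cC,\Gr)_L$ provided every map that has the RLP against $J$ (equivalently, is a levelwise fibration satisfying descent) \emph{and} is a local weak equivalence is already a levelwise trivial fibration. This implication is not formal, and the paper verifies it by hand using the characterization of local weak equivalences by local lifting conditions \cite[Theorem 5.7]{H}: faithfulness of each $\cF(X) \to \m(X)$ is checked by killing an automorphism on a cover and pulling it back through the descent square; fullness and essential surjectivity are checked by assembling local lifts into elements of $\holim \cF(U_\bullet)$ and then descending them to $\cF(X)$ via the homotopy pullback. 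Without this step (or a substitute for it), your argument only shows that levelwise fibrations satisfying descent lift against $J$, not against all local trivial cofibrations. This also means you have misplaced the ``genuinely nonformal point'': it is not that $c_s$ is automatically a fibration of groupoids (that is formal SM7 in the unlocalized structure), but that the class of maps characterized by lifting against $J$ really coincides with the class of local fibrations.
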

\begin{proof}
The outline of the proof follows the arguments in 
\cite[Lemma 7.2,7.3, Proposition 7.3]{DHI}.

Let $A \to B$ denote a generating cofibration
$\emptyset \to \ast$, $B\Z \to \ast$, $\{\ast,\ast\} \to \Delta^1$ in $\Gr$.
Let $J$ be the set of morphisms in $P(\cC,\Gr)$ consisting of 
\[ Z  \to Z \times \Delta^1, Z \in \cC\]
and
\[ |U_\bullet| \times B \coprod_{|U_\bullet| \times A} \tilde X \times A \to \tilde X \times B \]
where $\{U_i \to X\}$ is a cover and $|U_\bullet| \to \tilde X \to X$ is the factorization
of the natural map into a cofibration followed by a trivial fibration in $P(\cC,\Gr)_L$.

We claim that a map $\cF \to \m$ has the right lifting property with respect to the 
morphisms in $J$ iff it satisfies descent for covers and is a levelwise fibration. 
First note that a map $\cF \to \m$ is a levelwise fibration 
 iff it has the right lifting property with respect to the maps $\{Z  \to Z \times \Delta^1\}$.
Next observe that for a levelwise fibration $\cF \to \m$, the canonical map 
\begin{equation}
\label{mapg}
\Hom(\tilde X,\cF) \lra \Hom(|U_\bullet|, \cF)\times_{\Hom(|U_\bullet|, \m)} \Hom(\tilde X,\m)
\end{equation}
is a fibration (because $|U_\bullet| \to \tilde X$ is a cofibration in the levelwise model
structure $P(\cC,\Gr)$). A levelwise fibration $\cF \to \m$ satisfies the right lifting property with respect to $J$ if and only if the map \eqref{mapg} is a trivial fibration, and
therefore if and only if the following square is homotopy cartesian
\[ \xymatrix{ \Hom(\tilde X,\cF) \ar[r] \ar[d] &  \Hom(|U_\bullet|, \cF) \ar[d] \\ 
\Hom(\tilde X,\m) \ar[r] &  \Hom(|U_\bullet|, \m).  } \]
By definition, $\tilde X \ra X$ is a trivial fibration and therefore a 
levelwise weak equivalence. Since $X$ and $\tilde X$ are cofibrant and
 all objects are levelwise fibrant, 
\[ \Hom(X,\cF) \to \Hom(\tilde X, \cF), \qquad \Hom(X,\m) \to \Hom(\tilde X, \m) \]
are weak equivalences. This completes the proof of the claim.

It now suffices to show that $J$ provides a set of generating trivial 
cofibrations for $P(\cC,\Gr)_L$.
By \cite[Lemma 7.3]{DHI} it is enough to show that if $\cF \to \m$ is
a weak equivalence, a levelwise fibration, and satisfies descent for covers then
it is in fact a levelwise trivial fibration. We'll check the right lifting property
of $\cF(X) \to \m(X)$ with respect to the generating cofibrations in $\Gr$ for 
every $X \in \cC$.

Given a diagram
\[ \xymatrix{
B\Z \ar[d] \ar[r]^\alpha & \cF(X)\ar[d] \ar@{-->}[r] & \cF(U) \ar[d] \\ 
\ast \ar[r] & \m(X)   \ar@{-->}[r] & \m(U)  } \]
there exists a cover $U \to X$ such that the isomorphism $\alpha$ becomes trivial in $\cF(U)$.
As $\holim \cF(U_\bullet) \to \cF(U)$ is faithful, it follows that $\alpha$ is also trivial in $\holim \cF(U_\bullet)$. As 
$\cF \to \m$ satisfies descent for covers, $\alpha$ must be trivial to begin with.
This shows that $\cF(X) \to \m(X)$ is faithful for all $X$.

Given a commutative square
\[ \xymatrix{
\{\ast,\ast\} \ar[d] \ar[r] & \cF(X)\ar[d] \ar@{-->}[r] & \cF(U) \ar[d]  
\ar@2[r] & \cF(U\times_X U) \ar[d] \\ 
\Delta^1 \ar[r] \ar@{-->}[rru]^\alpha & \m(X)   \ar@{-->}[r] & \m(U) \ar@2[r] & 
\m(U\times_X U)  } \]
The local lifting conditions provide us with a lift $\alpha$ and the two images 
of $\alpha$ in $\cF(U\times_X U)$ lie over the same morphism in $\m(U\times_X U)$.
Since $\cF \to \m$ is levelwise faithful it follows that $\alpha$ is equalized by
the two maps. Thus $\alpha$ gives rise to a morphism in $\holim \cF(U_\bullet)$
and hence in $\cF(X)$. Thus $\cF(X) \to \m(X)$ is also full.

Given $a \in \m(X)$, the local lifting conditions provide us with a cover $U \to X$
such that $a$ lifts to an element in $\cF(U)$. As the two images in $\m(U\times_X U)$
are isomorphic, levelwise fullness implies that they are isomorphic in $\cF(U\times_X U)$
and this isomorphism satisfies descent by faithfulness of the map
$\cF(U\times_X U\times_X U) \to \m(U\times_X U\times_X U)$. This provides
us with an element in $\holim \cF(U_\bullet)$ and as $\cF \to \m$ satisfies 
descent for covers, this element lifts to $\cF(X)$ up to isomorphism. This proves
essentially surjectivity of $\cF(X) \to \m(X)$ and completes the proof.
\end{proof}

\begin{cor}
Given levelwise fibrations $\cF \we \cF'\fib \m$ with the first 
map a levelwise weak equivalence, then $\cF' \ra \m$ is a fibration
if and only if $\cF \ra \m$ is a fibration.
\end{cor}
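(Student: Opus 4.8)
The plan is to reduce everything to Proposition \ref{charac-fib}. By hypothesis both $\cF \to \m$ and $\cF' \to \m$ are levelwise fibrations, so Proposition \ref{charac-fib} tells us that each is a fibration in $P(\cC,\Gr)_L$ exactly when it satisfies descent for covers. Hence it is enough to prove that $\cF \to \m$ satisfies descent for covers if and only if $\cF' \to \m$ does.

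So fix a cover $\{U_i \to X\}$ and compare the two descent squares, which share the common bottom row $\m(X) \to \holim \m(U_\bullet)$. The levelwise weak equivalence $\cF \to \cF'$ restricts to a weak equivalence of groupoids $\cF(Y) \to \cF'(Y)$ for every $Y \in \cC$; in particular it gives a weak equivalence $\cF(X) \we \cF'(X)$ and a degreewise weak equivalence of cosimplicial groupoids $\cF(U_\bullet) \to \cF'(U_\bullet)$. Since every object of $\Gr$ is fibrant, $\holim_\Delta$ preserves degreewise weak equivalences, so $\holim \cF(U_\bullet) \to \holim \cF'(U_\bullet)$ is also a weak equivalence. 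These maps, together with the identity on $\m(X)$ and on $\holim \m(U_\bullet)$, constitute a map from the descent square of $\cF$ to that of $\cF'$ which is a weak equivalence at every corner.

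Because the homotopy-pullback property is invariant under a corner-wise weak equivalence of squares, the descent square for $\cF$ is homotopy cartesian if and only if the descent square for $\cF'$ is. As this holds for every cover, $\cF \to \m$ satisfies descent for covers if and only if $\cF' \to \m$ does, and the corollary follows. The argument is purely formal, the only substantive inputs being the homotopy invariance of $\holim_\Delta$ on cosimplicial groupoids (free of charge, since all groupoids are fibrant) and the stability of homotopy pullbacks under weak equivalences; the one point demanding care is simply verifying that the comparison genuinely assembles into a map of squares restricting to the identity along the $\m$-row, which is where I would focus the write-up.
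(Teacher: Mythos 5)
Your proof is correct and is essentially the argument the paper intends: the corollary is stated without proof precisely because it follows from Proposition \ref{charac-fib} in the way you describe, namely that for levelwise fibrations being a fibration in $P(\cC,\Gr)_L$ reduces to descent for covers, and the descent squares for $\cF$ and $\cF'$ are identified up to corner-wise weak equivalence (using that all groupoids are fibrant, so $\holim_\Delta$ preserves the levelwise equivalences, and that the comparison lives over $\m$ since $\cF \to \cF'$ is a map over $\m$). Your attention to the point that the comparison must restrict to the identity on the $\m$-row is exactly the right place to be careful, and it is guaranteed by the hypothesis that $\cF \to \m$ is the composite $\cF \we \cF' \fib \m$.
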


\begin{prop}
If $G$ is a sheaf on $\m$ and $H \in P(\cC,\Gr)_L/\m$ then 
$$\Hom_{P(\cC,\Gr)/\m}(H,BG)\cong h\Hom_{P(\cC,\Gr)_L/\m}(H,BG) = [H,BG]_{P(\cC,\Gr)_L/\m}.$$
\end{prop}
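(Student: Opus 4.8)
The plan is to exploit two properties of the projection $BG \to \m$: that it is a fibration in $P(\cC,\Gr)_L$, so that $BG$ is a fibrant object of the slice $P(\cC,\Gr)_L/\m$, and that it has discrete fibers. The first holds because $G$ is a sheaf, via the preceding proposition identifying $Sh(\cC/\m)$ with $(P(\cC,\Gr)_L/\m)_{df}$; the second is the lemma giving $BG \to \m$ discrete fibers. Since the $\Gr$-enrichment of $P(\cC,\Gr)_L$ coincides with that of $P(\cC,\Gr)$, the homotopy function complex may be computed as $h\Hom_{P(\cC,\Gr)_L/\m}(H,BG) \cong \Hom_{P(\cC,\Gr)/\m}(QH,BG)$, where $q\colon QH \to H$ is a cofibrant replacement of $H$ in the slice. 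Because cofibrations and trivial fibrations agree in $P(\cC,\Gr)_L$ and $P(\cC,\Gr)$, this $q$ is a levelwise trivial fibration of groupoids, i.e.\ a surjective equivalence $q_X\colon QH(X) \to H(X)$ for each $X \in \cC$.

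Next I would record the rigidity coming from discrete fibers: for any object $A \to \m$ of the slice, the groupoid $\Hom_{P(\cC,\Gr)/\m}(A,BG)$ is discrete. Indeed a morphism there is a homotopy $\eta\colon f \Rightarrow f'$ lying over the identity of the structure map, so for each $X$ and each object $x$ of $A(X)$ the component $\eta_{X,x}$ is a morphism of $BG(X)$ in the fiber over $\pi_X f_X(x)$; since that fiber is discrete, $\eta_{X,x}$ is an identity, forcing $\eta = \mathrm{id}$ and $f = f'$. Applying this with $A = QH$ shows that $h\Hom_{P(\cC,\Gr)_L/\m}(H,BG)$ is a discrete groupoid, hence equal to its set of components $[H,BG]_{P(\cC,\Gr)_L/\m}$; applying it with $A = H$ shows that $\Hom_{P(\cC,\Gr)/\m}(H,BG)$ is a set as well.

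It then remains to show that precomposition with $q$ defines a bijection $\Hom_{P(\cC,\Gr)/\m}(H,BG) \to \Hom_{P(\cC,\Gr)/\m}(QH,BG)$. Injectivity is immediate, since each $q_X$ is surjective on objects and morphisms. For surjectivity, given $g\colon QH \to BG$ over $\m$, I would define a lift levelwise by $f_X(a) = g_X(\tilde a)$ for any $\tilde a \in QH(X)$ with $q_X(\tilde a)=a$, and likewise on morphisms using the unique lift afforded by full faithfulness of $q_X$. Independence of the choice of $\tilde a$ is exactly where the discrete-fiber hypothesis enters: any two choices are joined by a vertical morphism of $QH(X)$ whose image under $g_X$ lands in a discrete fiber of $BG(X)$ and is therefore an identity. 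The resulting $f$ is the unique map with $fq = g$, and this uniqueness, together with surjectivity of $q$, forces compatibility with the restriction maps, so that $f$ is a genuine morphism of presheaves over $\m$.

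The main obstacle is this last step: checking that the levelwise lift $f$ is well defined and natural. Everything there rests on the discrete-fiber condition, which rigidifies maps into $BG$ and is ultimately what collapses the homotopy function complex onto the honest set of maps over $\m$; note in particular that a section of $q$ is not available, since $H$ need not be cofibrant, so the lift must be built by hand. Once the bijection $\Hom_{P(\cC,\Gr)/\m}(H,BG) \cong \Hom_{P(\cC,\Gr)/\m}(QH,BG)$ is established, combining it with the two discreteness observations yields the claimed chain of identifications.
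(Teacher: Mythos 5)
Your proposal is correct and follows essentially the same route as the paper: both replace $H$ by a cofibrant approximation $q\colon QH \to H$ (a levelwise trivial fibration, since cofibrations and trivial fibrations agree in the two model structures), use the discreteness of the fibers of $BG \to \m$ to show that any map $QH \to BG$ over $\m$ descends uniquely through $q$, and again use discrete fibers to see that the resulting hom-groupoid has no nonidentity homotopies, so $h\Hom$ collapses to $[H,BG]$. The paper packages the descent step as a unique extension along the levelwise pushout collapsing the contractible fibers of $q_X$, whereas you construct the extension by hand from lifts and check well-definedness and naturality, but the mathematical content is identical.
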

\begin{proof}
Let $K$ denote the cofibrant replacement of $H$ in $P(\cC,\Gr)$.
We need to show that $\Hom_{P(\cC,\Gr)/\m}(K,BG) \cong \Hom_{P(\cC,\Gr)/\m}(H,BG)$.
Given $X \in \cC$, the map $K(X) \to H(X)$ is a trivial fibration, so 
writing $K(X)_a$ for the fiber over $a \in \ob H(X)$, we have 
a pushout square in $\Gr$
$$\xymatrix{\coprod_{a \in ob(H(X))} K(X)_a \ar[r] \ar[d] & K(X) \ar[d] \\ 
ob(H(X)) \ar[r] & H(X).}$$
Since the fibers of $G(X) \to \m(X)$ are discrete, it follows that there
is a unique extension
$$\xymatrix{K(X) \ar[r] \ar@{->>}[d]_\sim & G(X) \ar@{->>}[d] \\ 
H(X) \ar@{-->}[ur]^{!} \ar@{->>}[r] & \m(X)}$$
so every map from $K$ factors uniquely through $H$. This proves the first
equivalence in the statement.

On the other hand, since $BG \to \m$ has discrete fibers, two maps
$K \to BG \in P(\cC,\Gr)/\m$ are homotopic if and only if they are equal,
which completes the proof.
\end{proof}
If $F,F'$ are presheaves on $\m$, $\Hom_{P(\cC,\Gr)/\m}(BF,BF')$ is a discrete 
groupoid so we have the following corollary.
\begin{cor}
\label{char-sheaves}
The composition 
$$Sh(\cC/\m) \llra{B} P(\cC,\Gr)_L/\m \ra Ho(P(\cC,\Gr)_L/\m)$$ 
induces an equivalence of $Sh(\cC/\m)$ with the full subcategory of 
the homotopy category which consists of fibrant objects 
with levelwise discrete fiber. 
\end{cor}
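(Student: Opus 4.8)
The plan is to obtain this as a formal consequence of the two preceding propositions, since almost all of the real work has already been done. Throughout write $\cS = (P(\cC,\Gr)_L/\m)_{df}$ for the full subcategory of $P(\cC,\Gr)_L/\m$ on the fibrations $\cF \fib \m$ with levelwise discrete fiber; since an object of the over category $P(\cC,\Gr)_L/\m$ is fibrant precisely when its structure map to $\m$ is a fibration, $\cS$ is exactly the full subcategory underlying the target described in the statement. The first proposition of this section tells us that $B$ restricts to an equivalence $Sh(\cC/\m) \simeq \cS$; in particular each $BF \fib \m$ is such a fibration, so the composite functor really does land in the asserted full subcategory of $Ho(P(\cC,\Gr)_L/\m)$.

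First I would dispose of essential surjectivity. The objects of the target are, by definition, the objects of $\cS$. Since $B \colon Sh(\cC/\m) \to \cS$ is an equivalence it is in particular essentially surjective, so any such object is isomorphic in $P(\cC,\Gr)_L/\m$, and hence a fortiori in $Ho(P(\cC,\Gr)_L/\m)$, to some $BF$.

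Next, for full faithfulness I would string together two bijections on hom-sets. Because $B$ is fully faithful, and because the mapping groupoid $\Hom_{P(\cC,\Gr)/\m}(BF,BF')$ is discrete by the remark just above, there is a natural bijection
$$\Hom_{Sh(\cC/\m)}(F,F') \cong \Hom_{P(\cC,\Gr)/\m}(BF,BF').$$
Applying the previous proposition with $H = BF \in P(\cC,\Gr)_L/\m$ and $G = F'$ then gives
$$\Hom_{P(\cC,\Gr)/\m}(BF,BF') \cong [BF,BF']_{P(\cC,\Gr)_L/\m} = [BF,BF']_{Ho(P(\cC,\Gr)_L/\m)}.$$
Composing the two displays identifies $\Hom_{Sh(\cC/\m)}(F,F')$ with the hom-set of the homotopy category, and I would check that this composite bijection is exactly the map induced by the functor (it sends a morphism of sheaves to the homotopy class of $B$ applied to it). Together with essential surjectivity this yields the claimed equivalence.

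The part needing care is not a delicate estimate but bookkeeping: there are three a priori different notions of ``map'' in play --- a strict morphism over $\m$, an isomorphism class in the enriched mapping groupoid, and a homotopy class in $Ho$ --- and the whole content is that discreteness of the fibers collapses them. Discreteness is used twice: once to see that the mapping groupoid is a set, and once (inside the previous proposition) to see that homotopic maps out of a cofibrant model already agree on the nose. The only thing I would be careful to spell out is that the two displayed bijections are natural in $F$ and $F'$, so that they genuinely assemble into a comparison of functors rather than a mere bijection of sets object by object.
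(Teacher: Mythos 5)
Your proof is correct and takes essentially the same route as the paper, which states this corollary as an immediate consequence of the restricted equivalence $(B,\Gamma)\colon Sh(\cC/\m) \simeq (P(\cC,\Gr)_L/\m)_{df}$, the discreteness of the mapping groupoids $\Hom_{P(\cC,\Gr)/\m}(BF,BF')$, and the preceding proposition identifying these strict hom-sets with the hom-sets $[BF,BF']_{P(\cC,\Gr)_L/\m}$ in the homotopy category. The bookkeeping you flag --- that the composite bijection is the one induced by the localization functor and is natural in $F$ and $F'$ --- is precisely the content the paper leaves implicit, and your treatment of it is sound.
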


We can now prove the main result of this section which states the invariance of the categories of sheaves under a local equivalence of presheaves of groupoids.
\begin{thm}\label{eq-sh}
A weak equivalence $\m' \llra{p} \m$ in $P(\cC,\Gr)_L$ 
induces a Quillen equivalence 
$$L:P(\cC,\Gr)_L/\m' \leftrightarrow P(\cC,\Gr)_L/\m:R.$$
The induced equivalence of homotopy categories 
yields an equivalence of categories
\begin{equation} \label{equiv-sh} 
p_*:Sh(\cC/\m') \leftrightarrow Sh(\cC/\m):p^*
\end{equation}
where the right adjoint $p^*$ is composition with 
$\cC/\m' \llra{p} \cC/\m$ and the left adjoint $p_*$ is the 
left Kan extension along $p$ followed by sheafification. 
\end{thm}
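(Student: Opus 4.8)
The plan is to prove the statement in two stages: first establish the Quillen equivalence $L \dashv R$ on the over-categories, and then extract the equivalence of sheaf categories from it using the homotopy-theoretic characterization of sheaves already in hand. For the Quillen adjunction, the right adjoint $R$ is pullback (composition with $p$): given $\cN \to \m$, form $\cN \times_\m \m' \to \m'$, or more simply restrict along $p$ at the level of over-categories. The left adjoint $L$ is composition with $p$, sending $\cN' \to \m'$ to $\cN' \to \m' \xrightarrow{p} \m$. First I would check that $(L,R)$ is a Quillen pair: $L$ clearly preserves cofibrations and trivial cofibrations since these are detected levelwise in $P(\cC,\Gr)_L/\m$ and $L$ does not alter the source object, only its structure map. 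Then, to see it is a Quillen \emph{equivalence}, I would invoke the general fact that for a weak equivalence $p \colon \m' \to \m$ in a left proper model category, the induced adjunction on over-categories is a Quillen equivalence; the essential input is that $p$ is a weak equivalence and that $P(\cC,\Gr)_L$ is left proper (which follows since it is a localization of the levelwise structure, and cofibrations/trivial fibrations agree with the levelwise ones, as noted in the excerpt).

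Once the Quillen equivalence is established, the second stage is to pass to homotopy categories and restrict to the full subcategories of fibrant objects with levelwise discrete fiber. Here I would lean heavily on Corollary \ref{char-sheaves}, which identifies $Sh(\cC/\m)$ with the full subcategory of $Ho(P(\cC,\Gr)_L/\m)$ spanned by fibrant objects with levelwise discrete fiber, and similarly for $\m'$. The derived functors $\mathbf{L}L$ and $\mathbf{R}R$ give an equivalence of the full homotopy categories, so the crux is to verify that this equivalence carries the discrete-fiber fibrant objects on one side to those on the other. For $\mathbf{R}R$ (derived pullback), given a fibration $\cN \fib \m$ with discrete fibers, the pullback $\cN \times_\m \m' \to \m'$ again has discrete fibers because the fiber over $a' \in \m'(X)$ is identified with the fiber over $p(a') \in \m(X)$, which is discrete by hypothesis; and it is a fibration since fibrations are stable under pullback. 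I would then record that under $p^* = \Gamma \circ \mathbf{R}R \circ B$ this matches the stated right adjoint, composition with $\cC/\m' \to \cC/\m$.

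The main obstacle I anticipate is the left adjoint $p_*$: showing that $\mathbf{L}L$ sends discrete-fiber fibrant objects to (homotopy classes of) discrete-fiber fibrant objects. The subtlety is that $L$ on over-categories does not literally preserve the discrete-fiber condition, and fibrant replacement in $P(\cC,\Gr)_L/\m$ may destroy levelwise discreteness. So the honest statement is that $\mathbf{L}L$ lands in the homotopy category and one must show the image is isomorphic in $Ho$ to a discrete-fiber fibrant object. I would argue this by transporting the concrete description through the equivalence $Sh(\cC/\m') \simeq$ discrete-fiber fibrants: since $\mathbf{R}R$ preserves the discrete-fiber subcategory and $\mathbf{L}L \dashv \mathbf{R}R$ is an equivalence of homotopy categories, $\mathbf{L}L$ must restrict to a (quasi-)inverse equivalence on these full subcategories, simply because an equivalence of categories together with a fully faithful right adjoint whose essential image contains a given full subcategory forces the left adjoint to restrict to an equivalence of the corresponding subcategories. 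Concretely, $p_* = $ (left Kan extension along $\cC/\m' \to \cC/\m$) followed by sheafification, and I would identify this formula with $\Gamma \circ \mathbf{L}L \circ B$ by checking it on representables/covers. The sheafification step is precisely what repairs the failure of left Kan extension to land in sheaves, i.e.\ it is the concrete incarnation of fibrant replacement in the discrete-fiber world.

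Finally I would assemble the pieces: the Quillen equivalence induces an equivalence $Ho(P(\cC,\Gr)_L/\m') \simeq Ho(P(\cC,\Gr)_L/\m)$, this restricts to an equivalence of the discrete-fiber fibrant subcategories by the preceding paragraph, and Corollary \ref{char-sheaves} reinterprets both subcategories as $Sh(\cC/\m')$ and $Sh(\cC/\m)$ respectively, yielding \eqref{equiv-sh} with the stated adjoints $p_*$ and $p^*$.
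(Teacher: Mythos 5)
Your overall architecture (Quillen equivalence on over-categories, then restriction to the discrete-fiber fibrant objects via Corollary \ref{char-sheaves}) is the same as the paper's, but both of your stages contain a genuine gap. First, the properness hypothesis: you invoke \emph{left} properness, but the relevant condition for over-categories is \emph{right} properness, which is what the paper cites (\cite[Corollary 5.8]{H}). The Quillen-equivalence criterion requires that for cofibrant $A \to \m'$ and a fibration $\cN \fib \m$, the map $A \to \cN\times_\m \m'$ is a weak equivalence iff $A \to \cN$ is; this follows by two-out-of-three once one knows the projection $\cN\times_\m \m' \to \cN$ is a weak equivalence, and that is precisely the statement that pulling back the weak equivalence $p$ along the fibration $\cN \fib \m$ preserves weak equivalences, i.e.\ right properness. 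Left properness (pushouts along cofibrations) is the condition relevant to under-categories and does not enter here. Worse, your parenthetical justification is the one that works for left properness --- left Bousfield localization preserves left properness --- whereas right properness is \emph{not} inherited by localization in general; this is exactly why the paper must appeal to a specific result about $P(\cC,\Gr)_L$ rather than to generalities about localizations.

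Second, and more seriously, your abstract argument that $\mathbf{L}L$ restricts to the discrete-fiber subcategories is invalid. Knowing that the equivalence $\mathbf{R}R$ carries the full subcategory $S \subset Ho(P(\cC,\Gr)_L/\m)$ of discrete-fiber fibrant objects into the corresponding subcategory $S'$ over $\m'$ shows only that $p^*\colon S \to S'$ is fully faithful; it does not give essential surjectivity, which is equivalent to the claim you need, namely that $\mathbf{L}L(S')$ lands in $S$ up to isomorphism. The abstract principle you state is false: take the identity equivalence of $\Set$ with itself, $S$ the singletons and $S'$ the sets of cardinality at most two; then $S$ maps into $S'$, but the inverse equivalence does not carry $S'$ into $S$. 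This essential surjectivity is where the paper does its real work: given a sheaf $F'$ on $\m'$, it identifies the fibers of $\underline{L}(BF') = \cF \to \m$ via homotopy function complexes, using the Quillen equivalence to get $\Hom_{P(\cC,\Gr)/\m}(X,\cF) \simeq h\Hom_{P(\cC,\Gr)/\m'}(\m'\times_\m X,\, \m'\times_\m \cF)$, concludes that these fibers are homotopically discrete, and then rectifies $\cF \to \m$ to a levelwise-equivalent fibration with honestly discrete fibers by a factorization argument combined with the characterization of fibrations in Proposition \ref{charac-fib}. Without some version of this argument, your proof has a hole at exactly the point you yourself flagged as the main obstacle.
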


\begin{proof}
For first statement it suffices to observe that $P(\cC,\Gr)_L$ is
right proper \cite[Corollary 5.8]{H}. $L$ is composition with $p$ and $R$ is pullback
by $p$.

The derived functor $\underline{R}$ is just the pullback when applied to 
fibrant objects, and the pullback of a fibration with discrete fibers 
is also one.
Furthermore the sections of $F \times_\m \m'(X)$ over $a\in \m'(X)$
are exactly the sections of $F(X)$ over $p(a) \in \m(X)$, and so 
$\underline{R}$ agrees with $p^*$ when applied to the image of a sheaf on $\m$.
It follows that $p^*$ is full and faithful and it remains to show that
 $p^*$ is essentially surjective.

The functor assigning to $\cF' \llra{f} \m'$ the second map in the 
factorization of $p \circ f$
$$\cF' \we \cF \fib \m$$
as trivial cofibration followed by a fibration
provides a model for the derived functor $\underline{L}$.

Given a fibration $\cF' \llra{f} \m'$ we have the following 
commutative diagram  
$$\xymatrix{\cF' \ar[r] \ar@{->>}[dr]^f & 
\m' \times_\m \cF \ar[r]^\sim \ar@{->>}[d] & 
\cF \ar@{->>}[d]^{\underline{L}f} \\
& \m' \ar[r]^\sim & \m. }$$
By two out of three the map $\cF' \ra \m' \times_\m \cF$ is a 
weak equivalence between fibrant objects and is therefore a levelwise
weak equivalence.

Given a sheaf $F'$ on $\m'$, applying $\underline{L}$ to $BF' \llra{f} \m'$ 
yields a fibration $\cF \ra \m$.  The fiber of $\cF \ra \m$ over $X \ra \m$ is
$$\Hom_{P(\cC,\Gr)/\m}(X,\cF) =  h\Hom_{P(\cC,\Gr)/\m}(X,\cF) \simeq $$
$$ \simeq h\Hom_{P(\cC,\Gr)/\m'}(\m' \times_\m X,\m' \times_\m\cF)$$
where the equivalence arises from $R$ being part of a Quillen equivalence.
Since $\m' \times_\m\cF \ra \m'$ is a fibration with homotopically 
discrete fibers the homotopy function complex of maps from 
any object in $P(\cC,\Gr)/\m'$ into it is homotopically discrete.
It follows that $\cF \ra \m$ also has homotopically discrete fibers.

Given $\cF \fib \m$ a fibration with levelwise homotopically discrete fibers
a variation on the construction in the last proof can be used to 
construct a factorization $\cF \ra \cF' \ra \m$, where $\cF \ra \m$ is 
a levelwise fibration with discrete fibers and $\cF \ra \cF'$ is a 
levelwise trivial fibration. It follows that $\cF' \ra \m$ is a fibration
 $P(\cC,\Gr)_L$ from the characterization of fibrations in Proposition \ref{charac-fib}.
We conclude that $\cF \ra \m$ is isomorphic to a sheaf in $Ho(P(\cC,\Gr)/\m)$.
The pullback of $\cF \ra \m$ is weakly equivalent to $BF'$
and so $p^*:Sh(\m) \ra Sh(\m')$ is essentially surjective.

Finally the description given for $p_*$ follows as it is the left
 adjoint of $p^*$.
\end{proof}

\begin{remark}
Since sheaves of abelian groups are just abelian group objects in 
the category of sheaves Theorem \ref{eq-sh} also yields an equivalence
of sheaves of abelian groups.  Similarly we obtain equivalences of sheaves
of rings, simplicial sets, and have the following equivalence for sheaves of
modules (see \cite[p.95]{MM}).  
\end{remark}

\begin{cor} \label{cor-qc}
Let $\m' \llra{p} \m$ be a weak equivalence in $P(\cC,\Gr)_L$.
Let $\cO$ be a sheaf of rings on $\m$, and
$\cO'= p^*\cO$, then $p^*$ induces an equivalence of 
categories $(\cO-mod) \llra{p^*} (\cO'-mod)$.
\end{cor}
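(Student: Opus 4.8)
The plan is to leverage the equivalence of categories $p^* \colon Sh(\cC/\m) \to Sh(\cC/\m')$ established in Theorem \ref{eq-sh} and to observe that it transports module structures. The essential point is that a sheaf of $\cO$-modules is nothing but a module object over the ring object $\cO$ in the category $Sh(\cC/\m)$, and this is a structure defined entirely in terms of finite products; any equivalence of categories preserving finite products carries such structures to the corresponding ones over the image ring. This is the general principle recorded in \cite[p.95]{MM}, so the work is in checking its hypotheses in our setting.

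First I would record that $p^*$, being one half of an equivalence of categories, preserves all limits and colimits, in particular finite products and the terminal object. Hence for any abelian group object $M$ in $Sh(\cC/\m)$ the object $p^*M$ inherits structure maps $p^*(M \times M) \cong p^*M \times p^*M \to p^*M$, and so on, making $p^*M$ an abelian group object; and for an $\cO$-module $M$ with action $\cO \times M \to M$, applying $p^*$ together with the canonical isomorphism $p^*(\cO \times M) \cong p^*\cO \times p^*M$ produces an action $\cO' \times p^*M \to p^*M$, where $\cO' = p^*\cO$ by definition. The module axioms, being commutative diagrams among finite-product data, are preserved by the functor $p^*$, and a module homomorphism is sent to a module homomorphism. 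Thus $p^*$ determines a functor $\cO\text{-mod} \to \cO'\text{-mod}$.

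To see this functor is an equivalence, I would use the quasi-inverse $p_*$ of Theorem \ref{eq-sh} together with the natural isomorphism $\eta \colon \mathrm{id} \xrightarrow{\sim} p_* p^*$ and its counterpart on $Sh(\cC/\m')$. Since $p_*$ likewise preserves finite products, it carries $\cO'$ to the ring $p_*\cO' = p_* p^*\cO \cong \cO$, the isomorphism being $\eta_{\cO}$, which is an isomorphism of ring objects because $\eta$ is a natural transformation between product-preserving functors. Transporting along this ring isomorphism identifies $(p_*\cO')\text{-mod}$ with $\cO\text{-mod}$, and the composites $p_* p^*$, $p^* p_*$ then become naturally isomorphic to the identities on the respective module categories. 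Hence $p^* \colon \cO\text{-mod} \to \cO'\text{-mod}$ is an equivalence.

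The only point requiring genuine care — the hard part — is the compatibility of this transport-of-structure with the unit and counit isomorphisms: verifying that $\eta_{\cO}$ is indeed a morphism of ring objects and that the module axioms are respected at each stage. This is a purely formal verification, valid for any equivalence of categories with finite products. Consequently, once the equivalence $p^*$ of Theorem \ref{eq-sh} and its preservation of finite products are in hand, the corollary follows immediately, and the same argument applies verbatim to the other algebraic structures (rings, simplicial sets) mentioned in the preceding remark.
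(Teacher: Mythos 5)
Your proposal is correct and follows essentially the same route as the paper: the paper derives Corollary \ref{cor-qc} from the remark preceding it, namely that sheaves of modules are module objects over the ring object $\cO$ in $Sh(\cC/\m)$ (citing \cite[p.95]{MM}), so the equivalence of Theorem \ref{eq-sh}, preserving finite products, transports the structure. Your write-up merely makes explicit the formal transport-of-structure verification that the paper leaves implicit.
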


I learned the following definition from M. Hopkins\footnote{
It follows from faithfully flat descent (see Section \ref{quasi-coh-flat}) that this definition generalizes \cite[Definition 13.2.2]{LM-B} for the \'etale site of a Deligne-Mumford stack. Roughly speaking the difference between the definitions is that \cite{LM-B} requires a sheaf to be 
globally presentable, in the sense that for each $X \ra \m$ the sheaf has what we have called 
a presentation, while we only require this to hold locally.}.
\begin{defn} 
\label{qcdef}
Let $\cO$ be a sheaf of rings on $\m$.
A \emph{quasi-coherent sheaf of modules relative to $\cO$} is an
$\cO$ module $\cF$ which is locally presentable.  
This means that for every $X \llra{a} \m$ 
there exists a cover $\{U_i \llra{u_i} X\}$ in $\cC$ and 
exact sequences of $(a \circ u_i)^*\cO$-modules
$$\oplus_I (a \circ u_i)^*\cO \ra \oplus_J (a \circ u_i)^*\cO 
\ra (a \circ u_i)^*\cF \ra 0.$$
\end{defn}

The category $\cO-mod_{qc}$ of quasi-coherent modules is by definition the 
full subcategory of $\cO-mod$ whose objects are quasi-coherent sheaves.
This is not necessarily an abelian category. Even if it is an abelian category, the 
inclusion of $\cO-mod_{qc}$ in $\cO-mod$ is not necessarily exact.
See Section \ref{quasi-coh-flat} for a discussion of this in the case of 
affine schemes in the flat topology.

\begin{cor}
\label{equivqc}
Let $\cO$ be a sheaf of rings on $\m$.
A weak equivalence $\m' \llra{p} \m$ in $P(\cC,\Gr)_L$ induces  
an equivalence of categories between quasi-coherent $\cO$ modules
and quasi-coherent $p^*\cO$ modules.
\end{cor}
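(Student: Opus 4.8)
The plan is to build on Corollary \ref{cor-qc}, which already supplies an equivalence $p^*\colon \cO-mod \to \cO'-mod$, and to show that it restricts to the full subcategories of quasi-coherent modules. Since $\cO-mod_{qc}$ and $\cO'-mod_{qc}$ are full subcategories and $p^*$ is fully faithful and essentially surjective on the ambient categories, the whole statement reduces to a single biconditional: an $\cO$-module $\cF$ is quasi-coherent if and only if $p^*\cF$ is a quasi-coherent $\cO'$-module. Granting this, $p^*$ restricts to a fully faithful functor $\cO-mod_{qc} \to \cO'-mod_{qc}$, and it is essentially surjective because, given a quasi-coherent $\cO'$-module $\cG$, one chooses $\cF$ with $p^*\cF \cong \cG$ and the ``if'' direction forces $\cF$ to be quasi-coherent.

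The forward direction is immediate from functoriality of pullback together with the identity $\cO' = p^*\cO$. Let $\cF$ be quasi-coherent and let $X \xrightarrow{a'} \m'$ be arbitrary, with $a = p \circ a'$. For any $u\colon U \to X$ we have $a \circ u = p \circ a' \circ u$, so $(a' \circ u)^*\cO' = (a \circ u)^*\cO$ and $(a' \circ u)^* p^*\cF = (a \circ u)^*\cF$. Hence the cover and the presentation witnessing quasi-coherence of $\cF$ at $a$ witness quasi-coherence of $p^*\cF$ at $a'$ verbatim.

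The content lies in the reverse direction, and this is where I expect the main obstacle. Assume $p^*\cF$ is quasi-coherent; I must produce, for each $X \xrightarrow{a} \m$, a cover of $X$ over which $a^*\cF$ admits a presentation. Since $p$ is a local weak equivalence it is locally essentially surjective (the first of the local lifting conditions), so there is a cover $\{V_j \to X\}$ and lifts $a'_j\colon V_j \to \m'$ together with homotopies $p \circ a'_j \simeq a|_{V_j}$. Quasi-coherence of $p^*\cF$ at each $a'_j$ then supplies a cover $\{W_{jk} \to V_j\}$ presenting $(a'_j \circ w_{jk})^* p^*\cF = (p \circ a'_j \circ w_{jk})^*\cF$. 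Because homotopic maps determine isomorphic objects of $\cC/\m$, pullback along the homotopic composites $p \circ a'_j \circ w_{jk}$ and $a \circ (W_{jk} \to X)$ yields naturally isomorphic functors, compatibly on $\cO$ and on $\cF$, so the presentation transports to a presentation of $a^*\cF$ over each $W_{jk}$. As the composites $\{W_{jk} \to X\}$ again form a cover of $X$, this exhibits the required local presentation.

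The only delicate point is this transport of presentations along the homotopies: I must check that the natural isomorphism of pullback functors induced by $p \circ a'_j \simeq a|_{V_j}$ respects the ring structure of $\cO$ and the $\cO$-module maps, so that an exact sequence of $(p \circ a'_j \circ w_{jk})^*\cO$-modules really does pass to an exact sequence of $(a \circ (W_{jk} \to X))^*\cO$-modules. This follows formally from the naturality of the isomorphisms attached to a homotopy $f \simeq f'$ recorded in the remark following the definition of $\cC/\m$, but it is the step where I would spend the care.
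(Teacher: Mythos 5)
Your proposal is correct and takes essentially the same approach as the paper: the forward direction by the identity $(a'\circ u)^*p^*\cO = (p\circ a'\circ u)^*\cO$, and the converse by using the local lifting conditions to lift $a$ over a cover, invoking quasi-coherence of $p^*\cF$ at the lift, and transporting the resulting presentations along the natural isomorphism of pullback functors induced by the homotopy. The paper's proof is just terser, compressing your two-stage cover $\{W_{jk} \to V_j \to X\}$ into a single refined cover $(U,p(b)) \xrightarrow{(u,\alpha)} (X,a)$ and phrasing the transport step you flag as delicate as an equivalence between the categories of quasi-coherent $(p(b))^*\cO$-modules and $(a\circ u)^*\cO$-modules.
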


\begin{proof}
It is obvious that $p^*$ applied to a quasi-coherent $\cO$ module 
is a quasi-coherent $p^*\cO$ module.  

Conversely, let $M$ be an $\cO$ module such that $p^*M$ is a
quasi-coherent $p^*\cO$ module. 
Given $X \llra{a} \m$, it follows from the local lifting conditions that 
we can find a cover of the form $(U,p(b)) \xrightarrow{(u,\alpha)} (X,a)$
such that $p(b)^*M$ is presentable.
The isomorphism $p(b) \llra{\alpha} a\circ u$ induces a natural isomorphism
between the functors $(p(b))^*$ and $(a \circ u)^*$ from sheaves of rings
on $\m$ to sheaves of rings on $U$.
It follows that $(p(b))^*\cO \cong (a \circ u)^*\cO$ and 
that there is an equivalence of categories between quasi-coherent modules over
$(p(b))^*\cO$ and $(a \circ u)^*\cO$.  Since 
$(p(b))^*M = b^*p^*M$ is quasi-coherent $(a \circ u)^*M$ is also
quasi-coherent and hence so is $M$.
\end{proof}

We note that while $p^*$ is always exact as a functor between categories
of $\cO$ modules, it will not in general be exact when restricted 
to $\cO-mod_{qc}$.   

\subsection{Application: sheaf cohomology spectral sequence}
\label{sectapp}
If $\cF$ is a sheaf of abelian groups on $\m$ we can regard $\cF$ 
as an abelian group object in $P(\cC,\Gr)/\m$.
The global sections $\Gamma(\cF)$ are isomorphic
to the discrete simplicial abelian group $\Hom_{P(\cC,\Gr)/\m}(\m,\cF)$. 
Let $\{\cU_i \ra \m\}$ be a collection of maps such that the induced map
$|\cU_\bullet| \ra \m$ is a weak equivalence.
Then we have weak equivalences of simplicial abelian groups

$$\Hom_{\m}(\m,\cF) \cong \Hom_{\m}(|\cU_\bullet|, \cF) 
\cong \lim \Hom_{\m}(\cU_\bullet, \cF)$$
$$ \cong \lim \Hom_{\cU_i}(\cU_i, \cF \times_\m \cU_i) \cong 
\lim \Gamma(\cF \times_\m \cU_i).$$
The Grothendieck spectral sequence for composition of functors
in this case yields a spectral sequence with $E_2$-term
$$\check{H}^i(R^j\Gamma(\cF \times_\m \cU_i))\Rightarrow R^{i+j}\Gamma(\cF).$$
This a generalization of the usual \v{C}ech cohomology spectral sequence for a cover
which holds by the usual proof (see \cite[Theorem I.3.4.4]{Ta}).

\section{Descent for Sheaves on $\m$}

In this section we use the homotopy theory of categories recalled
in Section \ref{hothcats} and a notion of homotopy decomposition of a site to prove descent
statements for categories of sheaves on $\m \in P(\cC,\Gr)_L$.
A very special case of these statements yields a 
characterization of sheaves (of quasi-coherent modules)
on the stack associated to a groupoid object in $\cC$.

In the case of affine schemes in the flat topology $\Aff_{flat}$
this says that quasi-coherent sheaves on the stack associated to a 
Hopf algebroid $(A, \Gamma)$ is equivalent to the category of 
$(A,\Gamma)$-comodules.  Combining this result with Theorem \ref{eq-sh} 
gives an alternate proof of a generalized change of rings theorem due
to Mark Hovey \cite{Ho}.

\subsection{Descent}

In order to phrase our descent statement for categories of sheaves we need
the following definition.
\begin{defn}  
Let $T$ be a site.  A \emph {homotopy decomposition} of $T$ is
an $I$ diagram of sites $T_I$ and an equivalence $\hocolim T_i \we T$ 
such that 
\begin{enumerate}
\item the induced maps $T_i \ra T$ are maps of sites, 
\item the images in $T$ of all the covers in $T_i$ generate the topology.
\end{enumerate}
\end{defn}

\begin{prop} Let $\cD$ be any category with products.
A homotopy decomposition $\hocolim T_i \we T$ induces equivalences 
of categories
$$P(T,\cD) \we \holim P(T_i,\cD)$$
$$Sh(T,\cD) \we \holim Sh(T_i,\cD)$$
\end{prop}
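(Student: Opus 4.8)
The plan is to prove the two equivalences by reducing to a $\holim$ in $\cCat$, using the explicit $\Tot^2$ model from Section~\ref{hothcats} and the definition of homotopy decomposition. Since $Sh(T,\cD)$ is a full subcategory of $P(T,\cD)$ cut out by the sheaf condition, and the homotopy decomposition hypothesis is precisely tailored so that the sheaf condition on $T$ is detected by the covers coming from the $T_i$, I expect the sheaf statement to follow from the presheaf statement by restricting to the relevant full subcategories on both sides. So the bulk of the work is the presheaf equivalence, and the sheaf version is a compatibility check.

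First I would recall that, by definition, a homotopy decomposition gives an equivalence $\hocolim T_i \we T$ of sites, and that $\hocolim$ here is the geometric realization computed via the small model in Section~\ref{hothcats} (objects from $T_0$, generating morphisms from $T_1$ subject to the $T_2$-relations). The key formal input is that $P(-,\cD)$, as a contravariant functor $T \mapsto \cCat(T^{op},\cD)$ (presheaves = functors out of $T^{op}$), should carry this homotopy colimit of sites to a homotopy limit of categories. Concretely I would write down the candidate functor $P(T,\cD) \to \holim P(T_i,\cD)$ sending a presheaf $F$ to the compatible system of restrictions $F|_{T_i}$ together with the coherence isomorphisms induced by the morphisms $i \to j$ in $I$, and then check it is an equivalence by comparing with the $\Tot^2$ formula: an object of $\holim P(T_i,\cD)$ is a presheaf on $T_0$ together with an isomorphism of its two pullbacks to $T_1$ satisfying the cocycle condition over $T_2$, and this is exactly the data of a single functor on the realization $\hocolim T_i$. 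Since the realization is equivalent to $T$, composing with restriction along $\hocolim T_i \we T$ (which is an equivalence of categories, hence induces an equivalence on presheaf categories) finishes the presheaf case.

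\medskip
\noindent\textbf{Sheaf version.} For the second equivalence I would argue that the presheaf equivalence $P(T,\cD)\we\holim P(T_i,\cD)$ restricts to the sheaf subcategories. The nontrivial direction uses condition~(2) in the definition of homotopy decomposition: a presheaf $F$ on $T$ is a sheaf iff it satisfies the sheaf condition against covers in some generating collection, and by hypothesis the images of the covers of the $T_i$ generate the topology on $T$. Hence $F$ is a sheaf on $T$ iff each restriction $F|_{T_i}$ is a sheaf on $T_i$ (the maps $T_i \to T$ being maps of sites by condition~(1), so covers pull back to covers and the sheaf conditions match up). Under the equivalence, objects of $\holim Sh(T_i,\cD)$ are exactly the compatible systems of sheaves, which correspond under the presheaf equivalence precisely to presheaves on $T$ that are locally sheaves against the generating covers, i.e.\ to sheaves on $T$.

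\medskip
The step I expect to be the main obstacle is the first equivalence: verifying carefully that $P(-,\cD)$ converts the specific $\cCat$-model for $\hocolim T_i$ into the $\Tot^2$-model for $\holim P(T_i,\cD)$, i.e.\ matching the generating-morphisms-and-relations description of the realization against the objects-with-cocycle-isomorphism description of the homotopy limit. This is essentially a bookkeeping identification of two adjoint (co)end formulas, but one must check that the coherence data line up exactly, that $\cD$ having products is enough to form the needed limit (since $\holim$ in $\cCat$ only requires products and that is what guarantees $\holim P(T_i,\cD)$ is computed by the equalizer formula recalled in Section~\ref{hothcats}), and that the equivalence $\hocolim T_i \we T$ indeed induces an equivalence rather than merely a weak map on presheaf categories. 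Once this matching is set up, both displayed equivalences fall out simultaneously, the sheaf one by restriction.
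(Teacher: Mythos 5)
Your proposal is correct and takes essentially the same route as the paper: the presheaf equivalence is obtained from the duality between the coend presentation of $\hocolim T_i$ and the end presentation of $\holim P(T_i,\cD)$ (together with the equivalence $\hocolim T_i \we T$), and the sheaf equivalence follows by restricting to the full subcategories of sheaves, using condition (2) of the definition of homotopy decomposition. The one step you use implicitly --- that $\holim Sh(T_i,\cD)$ sits inside $\holim P(T_i,\cD)$ as exactly the full subcategory of systems whose components are sheaves --- is what the paper isolates as Lemma \ref{subcat} on $\Tot$ of levelwise full subcategory inclusions, applied to the cosimplicial replacements.
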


\begin{proof}
Even though $\cD$ is not necessarily a small category, 
the functor categories are well defined and 
the duality of our presentations of $\hocolim$ and $\holim$ in
the previous subsection imply 
$$(\cD^{op})^{\hocolim T_i}= \holim (\cD^{op})^{T_i}$$
for any $I$ diagram of categories $T_I$ and any category $\cD$.

The proof for sheaves is an easy application of the following lemma
applied to the cosimplicial replacements of our diagrams.
\end{proof}

\begin{lemma} \label{subcat}
Let $\cD^\bullet \ra \cC^\bullet$ be a map of cosimplicial categories
such that each $\cD^i \ra \cC^i$ is a full subcategory, then 
$$\Tot(\cD^\bullet) = \cD^0 \times_{\cC^0} \Tot(\cC^\bullet).$$
In particular, if $\cC \we \Tot(\cC^\bullet)$ and $\cD \inc \cC$ is 
the full subcategory consisting of objects whose images in $\cC^0$ lie in 
the subcategory $\cD^0$ then $\cD \we \Tot(\cD^\bullet)$.
\end{lemma}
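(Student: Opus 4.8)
The plan is to prove the first (and main) assertion on the nose by working throughout with the explicit model $\Tot = \Tot^2$ recalled in Section~\ref{hothcats}, for which objects and morphisms of $\Tot(\cC^\bullet)$ are described using only the three bottom levels $\cC^0,\cC^1,\cC^2$ together with the cofaces and codegeneracies between them. Since these are the only data entering the formula, the whole argument runs on a single observation: $\cD^i \to \cC^i$ is full, hence fully faithful, hence injective on objects, for $i = 0,1,2$. The pullback $\cD^0 \times_{\cC^0}\Tot(\cC^\bullet)$ is formed along the zero-level evaluation $\Tot(\cC^\bullet)\to\cC^0$, $(x,\alpha)\mapsto x$.

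First I would unwind both sides. An object of $\Tot(\cD^\bullet)$ is a pair $(x,\alpha)$ with $x\in\ob\cD^0$ and $\alpha\colon d^0x \we d^1x$ an isomorphism in $\cD^1$ subject to $s^0\alpha = \id_x$ and $d^2\alpha\circ d^0\alpha = d^1\alpha$; an object of the pullback is, after using injectivity on objects of $\cD^0\hookrightarrow\cC^0$, the same pair $(x,\alpha)$ but now with $\alpha$ an isomorphism in $\cC^1$ and the equations computed in $\cC$. These two descriptions coincide: because $d^0,d^1$ are functors $\cD^0\to\cD^1$ the objects $d^0x,d^1x$ lie in $\cD^1$, and since $\cD^1\to\cC^1$ is fully faithful we have $\Iso_{\cD^1}(d^0x,d^1x) = \Iso_{\cC^1}(d^0x,d^1x)$ (the inverse of an isomorphism between objects of a full subcategory again lies in it); moreover the defining equations are equalities of morphisms that are images of $\cD$-morphisms in $\cC^0$ and $\cC^2$, so by faithfulness at levels $0$ and $2$ they hold in $\cD$ iff in $\cC$. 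The same three facts, applied to a morphism — a map $h\colon x\to y$ at the zero level subject to $\beta\circ d^0h = d^1h\circ\alpha$ at level one — give the corresponding bijection on morphisms. This establishes the identification $\Tot(\cD^\bullet) = \cD^0\times_{\cC^0}\Tot(\cC^\bullet)$ as categories.

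For the ``in particular'' statement I would first note that, because $\cD^0\hookrightarrow\cC^0$ is a full subcategory, the full subcategory $\cD\subseteq\cC$ of objects lying over $\cD^0$ is exactly the strict pullback $\cD = \cD^0\times_{\cC^0}\cC$. Writing $p\colon\cC\to\cC^0$ and $q\colon\Tot(\cC^\bullet)\to\cC^0$ for the zero-level evaluations, the given equivalence $e\colon\cC\we\Tot(\cC^\bullet)$ commutes with $p$ and $q$, so it restricts to a functor $e|\colon\cD\to\cD^0\times_{\cC^0}\Tot(\cC^\bullet) = \Tot(\cD^\bullet)$; this restriction is fully faithful because $\cD$ and $\Tot(\cD^\bullet)$ are full subcategories of $\cC$ and $\Tot(\cC^\bullet)$ and $e$ is fully faithful.

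The remaining point, essential surjectivity of $e|$, is where I expect the only genuinely non-formal obstacle. Given $w\in\Tot(\cD^\bullet)$ and, by essential surjectivity of $e$, an object $c\in\cC$ with $e(c)\cong w$, the evaluations produce an isomorphism $p(c)\cong q(w)$ in $\cC^0$ with $q(w)\in\ob\cD^0$; to conclude $c\in\cD$ one needs $p(c)\in\ob\cD^0$ as well, which is not automatic for an arbitrary full subcategory. This is guaranteed as soon as $\cD^0$ is \emph{replete} in $\cC^0$ (closed under isomorphism), which holds in every application of the lemma, since there $\cD^0$ is cut out of $\cC^0$ by an isomorphism-invariant property, namely the sheaf condition. (Alternatively one checks directly that $q$ is an isofibration — any isomorphism $x'\to x$ in $\cC^0$ lifts to $\Tot(\cC^\bullet)$ by conjugating the structure isomorphism $\alpha$ — and combines this iso-lifting with repleteness to transport $c$ to an isomorphic object lying strictly over $\ob\cD^0$.) With essential surjectivity established, $e|$ is an equivalence $\cD\we\Tot(\cD^\bullet)$.
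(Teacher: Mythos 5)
Your proof is correct, and in fact the paper offers no proof of this lemma at all --- it is stated and immediately applied --- so there is nothing to deviate from. Your unwinding of the explicit $\Tot^2$ model, using only fullness of $\cD^i \inc \cC^i$ for $i=0,1,2$, is evidently the intended argument, and it does give the identity $\Tot^2(\cD^\bullet) = \cD^0 \times_{\cC^0} \Tot^2(\cC^\bullet)$ on the nose (and the same bookkeeping works for the end formula for $\Tot$ itself, since the vertices of $\fpi\Delta^n$ force every level of a compatible family over an object of $\cD^0$ into $\cD^n$). One small slip in your justification: ``fully faithful'' does not in general imply ``injective on objects''; injectivity here is automatic simply because a full subcategory inclusion is, by definition, injective on objects.

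The more valuable part of your write-up is the treatment of the ``in particular'' clause, where you have located a genuine imprecision that the paper passes over silently. As literally stated, the conclusion $\cD \we \Tot(\cD^\bullet)$ can fail: take $\cC^\bullet$ constant on a category with two isomorphic objects $a \cong b$, so that $\Tot(\cC^\bullet)$ is equivalent to that category; let $\cC$ be a one-object category with the equivalence $e$ sending its object to $b$, and let $\cD^0$ be the full subcategory on $a$ alone. Then $\cD$ is empty while $\Tot(\cD^\bullet)$ is not, so no equivalence exists. Your repleteness hypothesis --- closure of $\ob \cD^0$ under isomorphism in $\cC^0$ --- is exactly what rescues essential surjectivity of the restricted functor, and, as you observe, it holds in every application the paper makes of the lemma: there $\cD^\bullet$ consists of sheaves inside presheaves (or quasi-coherent modules inside all modules), and a presheaf isomorphic to a sheaf is a sheaf. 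So your argument is not only correct but sharpens the statement; your auxiliary observation that evaluation at level zero is an isofibration (conjugating the structure isomorphism $\alpha$ by $d^0\gamma$ and $d^1\gamma$) is also correct, though once repleteness is assumed it is not needed.
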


\begin{prop}
Let $\cU_I$ be an $I$ diagram in $P(\cC,\Gr)$.
There is a canonical homotopy decomposition 
$$\hocolim (\cC/\cU_i) \we \cC/(\hocolim \cU_i).$$
\end{prop}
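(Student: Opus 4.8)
The plan is to establish the homotopy decomposition by verifying the two conditions in the definition for the $I$-diagram of sites $\cC/\cU_I$ together with a natural comparison map to $\cC/(\hocolim \cU_i)$. First I would construct the canonical map $\hocolim(\cC/\cU_i) \we \cC/(\hocolim \cU_i)$ at the level of underlying categories. Recall from Remark after the first definition that each $\cC/\cU_i$ is the Grothendieck construction on the functor $\cU_i \colon \cC^{op} \ra \Gr$, and likewise $\cC/(\hocolim \cU_i)$ is the Grothendieck construction on $\hocolim \cU_i$. Since the geometric realization $|\cdot|$ computing $\hocolim$ in $P(\cC,\Gr)$ is formed objectwise (as recorded in Section 2.2, $|F_\bullet|(Y)=|F_\bullet(Y)|$), and the Grothendieck construction interacts well with the explicit coend/coequalizer models for $\hocolim$ in $\cCat$ from Section \ref{hothcats}, I expect the comparison map to be an isomorphism, or at least an equivalence, essentially because both sides are assembled from the same generating objects $(X,a)$ with $X \in \cC$ and $a \in \cU_i(X)$ and the same generating morphisms coming from the simplicial structure.

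The cleanest route is to use the small model for $|\cC_\bullet|$ described at the end of Section \ref{hothcats}: its objects are those of $\cC_0$ and its morphisms are generated by those of $\cC_0$ together with isomorphisms $f_y\colon d_0 y \ra d_1 y$ for $y \in \cC_1$, subject to the three listed relations. Applying this to the simplicial diagram of Grothendieck constructions, I would match objects $(X,a)$ in $\hocolim(\cC/\cU_i)$ with objects $(X,a)$ in $\cC/(\hocolim\cU_i)$, and match the generating isomorphisms $f_y$ on both sides with the isomorphisms coming from the realization $|\cU_\bullet|(X)$ inside $(\hocolim\cU_i)(X)$. The relations $f_{s_0 x}=\mathrm{id}$, naturality, and the cocycle condition on $\cC_2$ correspond precisely to the relations defining morphisms in the Grothendieck construction on $|\cU_\bullet|(X)$, so the induced functor is an isomorphism of categories. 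This handles the equivalence $\hocolim(\cC/\cU_i) \we \cC/(\hocolim\cU_i)$.

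It then remains to check the two conditions making this a homotopy decomposition of the site $\cC/(\hocolim\cU_i)$. Condition (1), that each $\cC/\cU_i \ra \cC/(\hocolim\cU_i)$ is a map of sites, follows because the map is induced by the structure map $\cU_i \ra \hocolim \cU_i$ in $P(\cC,\Gr)$ and, by the definition of covers in $\cC/\m$ (those collections of morphisms which forget to covers in $\cC$), the functor preserves covers and is compatible with the pullback formula of Lemma after the first definition. Condition (2), that the images of all covers in the $\cC/\cU_i$ generate the topology on $\cC/(\hocolim\cU_i)$, is where I expect to use Proposition \ref{covers}: that result says covers of the special form $\{(U_j, f\circ u_j)\xrightarrow{(u_j,\mathrm{id})}(X,f)\}$ already generate the topology on any $\cC/\m$. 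So I would argue that every such generating cover of $\cC/(\hocolim\cU_i)$ based at an object $(X,f)$ can, after the identification above, be realized up to isomorphism in $\cC/\cU_i$ for an appropriate index $i$, since the object $f\colon X \ra \hocolim\cU_i$ factors (in the small realization model) through some $\cU_i$ at the level of objects of the groupoid $(\hocolim\cU_i)(X)$.

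The main obstacle will be condition (2): verifying that an arbitrary object $(X,f)$ of $\cC/(\hocolim \cU_i)$, together with its generating covers from Proposition \ref{covers}, really lifts to one of the $\cC/\cU_i$. The subtlety is that an object of $(\hocolim\cU_i)(X)$ need not come from a single $\cU_i(X)$ on the nose — only up to the generating isomorphisms $f_y$ — so the lift may only exist after passing to an isomorphic object, and one must check this isomorphism does not disturb the covers. I expect this to be resolved by the observation, already built into the small realization model, that every object of $|\cU_\bullet|(X)$ is literally an object of some $\cU_i(X)$, with only the \emph{morphisms} being freely generated; combined with Remark (a) after the first definition (isomorphic maps $f \cong f'$ give isomorphic objects of the slice site with the same sheaf values), this shows the generating covers transport correctly, completing the verification.
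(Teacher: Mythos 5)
Your proposal is correct and takes essentially the same route as the paper: the paper gets the identification $\hocolim(\cC/\cU_i) \cong \cC/(\hocolim \cU_i)$ in one line by writing the Grothendieck construction as the coend $\cC/(-)\otimes_\cC \cU_i$ and commuting it past the coend $(-)\otimes_I \fpi(-/I)$ computing $\hocolim$, which is just the abstract form of your generators-and-relations matching in the small realization model. Your handling of condition (2) --- that in the simplicial-replacement model every object of $(\hocolim \cU_i)(X)$ is literally an object of some $\cU_i(X)$, so every map $X \ra \hocolim \cU_i$ factors through some $\cU_i$, after which Proposition \ref{covers} shows the transported covers generate the topology --- is precisely the paper's concluding argument.
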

\begin{proof}
Recall that $\cC/\cU_i$ is the Grothendieck construction on the functor
$\cU_i:\cC^{op} \ra \Gr$, or the coend $\cC/(-) \otimes_\cC \cU_i$.
An $I$ diagram $\cU_I$ in $P(\cC,\Gr)$ is a functor 
$\cC^{op} \times I \ra \Gr$.  Since coends commute we have 
$$\hocolim(\cC/U_i) = (\cC/(-) \otimes_\cC \cU_I) \otimes_I \fpi(-/I) \cong$$
$$\cong \cC/(-) \otimes_\cC (\cU_{I} \otimes_I \fpi(-/I)) 
= \cC/(\hocolim \cU_i).$$
Using the presentation of the homotopy colimit obtained by simplicial 
replacement of the diagram, we see that all maps $X \ra (\hocolim \cU_i)$ 
factor through some $\cU_i$.  It follows from Proposition \ref{covers}
that the equivalence is a homotopy decomposition.
\end{proof}
The main descent statement of this section is the following corollary of
the previous proposition.
\begin{prop} \label{desc-sh}
Let $\cD$ be any category with products, $\cU_I$ be an
$I$ diagram in $P(\cC,\Gr)_L$ and $\hocolim \cU_i \we \m$ a weak equivalence.
There is an equivalence of categories of presheaves
$$P(\cC/\m, \cD) \we \holim P(\cC/U_i, \cD)$$
and sheaves
$$Sh(\cC/\m, \cD) \we \holim Sh(\cC/U_i , \cD).$$
\end{prop}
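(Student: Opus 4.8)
The plan is to deduce Proposition~\ref{desc-sh} from the previous proposition by showing that the weak equivalence $\hocolim\cU_i \we \m$ allows us to transport the canonical homotopy decomposition. The previous proposition gives a canonical homotopy decomposition $\hocolim(\cC/\cU_i) \we \cC/(\hocolim\cU_i)$, and the earlier proposition on homotopy decompositions then delivers the equivalences $P(\cC/(\hocolim\cU_i),\cD) \we \holim P(\cC/\cU_i,\cD)$ and similarly for sheaves. So the content that remains is to replace $\hocolim\cU_i$ by $\m$, i.e.\ to check that the weak equivalence $\hocolim\cU_i \we \m$ induces equivalences $P(\cC/\m,\cD)\we P(\cC/(\hocolim\cU_i),\cD)$ and $Sh(\cC/\m,\cD)\we Sh(\cC/(\hocolim\cU_i),\cD)$ compatibly. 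This is exactly the invariance established in Theorem~\ref{eq-sh}, which says a weak equivalence in $P(\cC,\Gr)_L$ induces an equivalence of sheaf categories; the same argument handles presheaves, and the generalization to coefficients in an arbitrary $\cD$ with products follows because all the constructions involved (restriction along $\cC/(\hocolim\cU_i)\to\cC/\m$, left Kan extension, sheafification) are defined levelwise and preserve the relevant limits.

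\emph{First} I would invoke the canonical homotopy decomposition $\hocolim(\cC/\cU_i)\we\cC/(\hocolim\cU_i)$ from the preceding proposition, which is already verified to be a genuine homotopy decomposition via Proposition~\ref{covers}. \emph{Second}, I would apply the homotopy decomposition proposition to obtain
$$P(\cC/(\hocolim\cU_i),\cD)\we\holim P(\cC/\cU_i,\cD),\qquad Sh(\cC/(\hocolim\cU_i),\cD)\we\holim Sh(\cC/\cU_i,\cD).$$
\emph{Third}, I would use the weak equivalence $p\colon\hocolim\cU_i\we\m$ to identify the left-hand sides with the corresponding categories over $\m$. For sheaves of sets (or values in $\cD$), Theorem~\ref{eq-sh} shows $p^*\colon Sh(\cC/\m,\cD)\we Sh(\cC/(\hocolim\cU_i),\cD)$ is an equivalence; the analogous statement for presheaves is immediate since $\cC/(\hocolim\cU_i)\to\cC/\m$ is already an equivalence of underlying categories by the remark that a homotopy of maps $X\to\m$ induces an isomorphism in $\cC/\m$, so restriction along it is an equivalence of presheaf categories. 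Composing these equivalences yields the desired statements over $\m$.

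\emph{The main obstacle} will be justifying the passage to an arbitrary target category $\cD$ with products, since Theorem~\ref{eq-sh} and Corollary~\ref{char-sheaves} are stated for groupoid-valued sheaves (sheaves of sets). The clean way to handle this is to observe that the equivalence $Sh(\cC/\m)\we Sh(\cC/(\hocolim\cU_i))$ from Theorem~\ref{eq-sh} is induced by a map of sites and hence is compatible with all limit constructions; a $\cD$-valued sheaf is just a sheaf of sets equipped with $\cD$-structure, so the equivalence lifts levelwise. Concretely, the restriction functor $p^*$ is defined by composition with $\cC/(\hocolim\cU_i)\to\cC/\m$ and therefore commutes with the forgetful functor to underlying sets, so an object of $Sh(\cC/\m,\cD)$ is sent to an object of $Sh(\cC/(\hocolim\cU_i),\cD)$ and the fully-faithfulness and essential surjectivity are checked on underlying set-valued sheaves exactly as in the proof of Theorem~\ref{eq-sh}. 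I would then remark that the sheaf case of the homotopy-decomposition proposition is precisely where Lemma~\ref{subcat} enters, identifying $\holim Sh(\cC/\cU_i,\cD)$ as the full subcategory of $\holim P(\cC/\cU_i,\cD)$ on objects whose underlying presheaf is a sheaf, which matches $Sh(\cC/\m,\cD)$ under the presheaf equivalence.
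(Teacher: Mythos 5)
Your first two steps are exactly the paper's own derivation: Proposition \ref{desc-sh} is stated there as an immediate corollary of the canonical homotopy decomposition $\hocolim(\cC/\cU_i) \we \cC/(\hocolim \cU_i)$ together with the proposition that any homotopy decomposition induces equivalences on presheaf and sheaf categories (the latter proved for arbitrary $\cD$ by the formal duality $(\cD^{op})^{\hocolim T_i} = \holim (\cD^{op})^{T_i}$ and Lemma \ref{subcat}, never via underlying sets). Up to that point your proposal matches the paper.

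The gap is in your third step, where you bridge $\hocolim \cU_i$ and $\m$. You assert that $\cC/(\hocolim \cU_i) \to \cC/\m$ is an equivalence of underlying categories because a homotopy between maps $X \to \m$ gives an isomorphism in $\cC/\m$. That remark only says the Grothendieck construction identifies homotopic maps; it gives neither essential surjectivity nor fullness of the induced functor. The construction $F \mapsto \cC/F$ turns \emph{levelwise} equivalences into equivalences of categories, but not \emph{local} ones, and under the paper's standing convention the hypothesis $\hocolim \cU_i \we \m$ is a local weak equivalence. Concretely, for a cover $\{U \to X\}$ admitting no section, the local equivalence $|U_\bullet| \we X$ induces $\cC/|U_\bullet| \to \cC/X$ whose objects all map to objects of the form $(Y,\, Y \to U \to X)$; the object $(X, \mathrm{id}_X)$ is isomorphic to none of these, so the functor is not essentially surjective and restriction along it is not an equivalence of presheaf categories. (If your claim were true, Theorem \ref{eq-sh} would be a triviality, needing no model-category input at all.) A second, related problem: your fallback, that a $\cD$-valued sheaf is ``a sheaf of sets with $\cD$-structure,'' is meaningless for an arbitrary category with products, since there is no underlying-set functor; so Theorem \ref{eq-sh} cannot be promoted to general $\cD$ this way (the paper's remark extends it only to algebraic cases such as abelian groups, rings and modules). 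The repair is to note what the hypothesis must mean for the statement as written: if $\hocolim \cU_i \we \m$ is levelwise (as in the paper's applications, e.g. $|(X_0,X_1)_\bullet| \we \m_{(X_0,X_1)}$), then $\cC/(\hocolim \cU_i) \to \cC/\m$ is a genuine equivalence of sites --- by levelwise essential surjectivity and full faithfulness plus Proposition \ref{covers} --- and both claims follow for arbitrary $\cD$ with no appeal to Theorem \ref{eq-sh}; if it is only local, the presheaf claim is not available by your route, and the sheaf claim is justified only for those coefficient categories to which \ref{eq-sh} and its remark actually apply.
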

The previous result yields the following more explicit description
of the category of sheaves on $\m$.
\begin{cor} \label{holim-sh}
Let $\cU_I$ be an $I$ diagram in $P(\cC,\Gr)_L$ 
and $\hocolim \cU_i \we \m$ a weak equivalence.
The category of $Sh(\m)$ is equivalent to the category whose 
\begin{itemize}
\item objects are collections $\{F_i,\alpha_f\}$ where 
\begin{enumerate}[(i)] 
\item $F_i$ is a sheaf on $\cU_i$,
\item $\alpha_f\colon f^*F_j \to  F_i$ is an isomorphism,
\end{enumerate}
satisfying $\alpha_{id_i}=id_{F_i}$ and $\alpha_{g \circ f} = \alpha_f \circ f^*(\alpha_g)$
for each $i \in \ob I$ and $i\llra{f} j \llra{g} k \in I$,
\item morphisms $\{F_i,\alpha_f\} \ra \{F_i',\beta_f\}$ are maps 
$\phi_i:F_i \ra F'_i \in Sh(\cU_i)$
such that $\phi_i \circ \alpha_f = \beta_f \circ f^*\phi_j $.
\end{itemize}
\end{cor}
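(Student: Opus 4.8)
The plan is to combine the descent equivalence of Proposition \ref{desc-sh} with the explicit $\Tot^2$ model for homotopy limits in $\cCat$ recorded in Section \ref{hothcats}, and then read off the resulting data. First I would apply Proposition \ref{desc-sh} in the case $\cD=\Set$ to the given weak equivalence $\hocolim \cU_i \we \m$, obtaining an equivalence of categories
$$Sh(\cC/\m) \we \holim_I Sh(\cC/\cU_i),$$
where the homotopy limit is taken in $\cCat$ and the diagram $i \mapsto Sh(\cC/\cU_i)$ is contravariant in $I$ (a morphism $f\colon i \ra j$ induces the restriction $f^*\colon Sh(\cC/\cU_j) \ra Sh(\cC/\cU_i)$). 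Everything then reduces to identifying this homotopy limit concretely.

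To do so I would form the cosimplicial replacement of the diagram $Sh(\cC/\cU_-)$ and apply $\Tot^2$, which computes the homotopy limit by the discussion in Section \ref{hothcats}. In cosimplicial degree $0$ this replacement is $\prod_{i \in \ob I} Sh(\cC/\cU_i)$ and in degree $1$ it is $\prod_{f\colon i \ra j} Sh(\cC/\cU_i)$, the two coface maps being respectively the projection recording $F_i$ and the map induced by the restrictions $f^*$; the degenerate $1$-simplices are indexed by the identities $id_i$, and $s^0$ records the factor at $id_i$. Feeding this into the description of $\Tot^2$, an object is a pair $\bigl((F_i)_i,\alpha\bigr)$ in which $(F_i)_i$ is an object of the degree-$0$ category, i.e. a sheaf $F_i$ on each $\cU_i$, and $\alpha$ is an isomorphism $d^0x \we d^1x$ in degree $1$, i.e. a family of isomorphisms $\alpha_f\colon F_i \we f^*F_j$, one for each $f\colon i \ra j$.

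It then remains to unwind the three constraints. The normalization $s^0\alpha = id_x$ gives $\alpha_{id_i}=id_{F_i}$; evaluating the cocycle relation $d^2\alpha\circ d^0\alpha = d^1\alpha$ at a composable pair $i \llra{f} j \llra{g} k$ gives $\alpha_{g\circ f} = f^*(\alpha_g)\circ\alpha_f$; and the condition $\beta\circ d^0 h = d^1 h\circ\alpha$ on a morphism $h=(\phi_i)_i$ of degree-$0$ objects gives $\beta_f\circ\phi_i = f^*\phi_j\circ\alpha_f$. These are already the objects and morphisms in the statement, up to the direction convention addressed below.

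The only genuine subtlety — and the step I would be most careful about — is bookkeeping the variance and the direction of the structure isomorphisms. Because $Sh$ is contravariant, the homotopy limit is that of the diagram indexed by $I^{op}$, so in reindexing the factors of the cosimplicial replacement by honest morphisms of $I$ the source and target of each $\alpha_f$ get reversed. Replacing each $\alpha_f\colon F_i \we f^*F_j$ by its inverse $f^*F_j \we F_i$ turns the three relations above into $\alpha_{id_i}=id_{F_i}$, $\alpha_{g\circ f}=\alpha_f\circ f^*(\alpha_g)$, and $\phi_i\circ\alpha_f = \beta_f\circ f^*\phi_j$, exactly as displayed in the corollary. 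This conversion is entirely routine once the indexing is set up carefully, and no further input is needed, so the equivalence follows.
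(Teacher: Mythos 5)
Your proposal is correct and is exactly the argument the paper intends: Corollary \ref{holim-sh} is stated as an immediate consequence of Proposition \ref{desc-sh} combined with the cosimplicial replacement and $\Tot^2$ description of homotopy limits in $\cCat$ recalled in Section \ref{hothcats}, which is precisely what you carry out. Your careful handling of the variance and of the direction of the isomorphisms $\alpha_f$ (inverting them to match the stated form) is the only nontrivial bookkeeping, and you resolve it correctly.
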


\subsection{Descent for Quasi-coherent Sheaves}

Next we prove a version of these results for quasi-coherent sheaves.

\begin{defn}  A ringed space in $P(\cC,\Gr)$,
is a pair $(\cU, \cO_{\cU})$, where $\cU \in P(\cC,\Gr)$, and 
$\cO_{\cU}$ is a sheaf of rings on $\cU$.
A morphism of ringed spaces in $P(\cC,\Gr)$,
$(\cU, \cO_{\cU}) \ra (\cV,\cO_{\cV})$ consists of a morphism
$f:\cU \ra \cV \in P(\cC,\Gr)$ and an isomorphism $\cO_{\cU} \we f^* \cO_{\cV}$
of sheaves of rings on $\cU$.
\end{defn}

\begin{example}
If $\cC = \Aff$ with any reasonable topology and $\m \in P(\cC,\Gr)$, the assignment $\cO_{\m}(\Spec R \to \m) = R$ yields a ringed space.
\end{example}

An $I$ diagram of ringed spaces $(\cU_I,\cO_I)$ consists of 
an $I$ diagram $\cU_I$ in $P(\cC,\Gr)$ together with
sheaves of rings $\cO_i$ on $\cU_i$ and
for each $i\llra{\phi} j$ isomorphisms
$\phi^*\cO_j \we \cO_i$ of sheaves of rings on $\cU_i$ satisfying descent (i.e.
the conditions in Corollary \ref{holim-sh}). 
Such a diagram gives rise to an $I^{op}$ diagram of categories
$$i \mapsto \cO_i-mod$$
which assigns to a morphism $i\llra{\phi} j \in I$ the composite functor 
$$\cO_i-mod \quad \lra \quad \phi^*\cO_i-mod  \quad \lra \quad \cO_j-mod.$$

A diagram of ringed spaces $(\cU_I, \cO_I)$ gives rise to an element 
$[\cO_I] \in \holim(Sh(\cU_i, \Ring))$.  Using Corollary \ref{holim-sh}
one can see that $[\cO_I]$ is a ring object in $\holim(Sh(\cU_i, \Set))$ and
it is straightforward to check that the category of modules over $[\cO_I]$ is 
equivalent to the homotopy inverse limit of the $I^{op}$ diagram of categories 
$i \mapsto \cO_i-mod$. As a consequence we have the following result.

\begin{prop}
\label{desc-qcsh}
Let $(\cO_I, \cU_I)$ be an $I$ diagram of ringed spaces in $P(\cC,\Gr)$.
Let $\cO$ be a sheaf of rings on $\hocolim \cU_i$ which is 
isomorphic to $[\cO_I] \in \holim Sh(\cU_i,\Ring)$.
Then
$$\cO-mod \we \holim(\cO_i -mod)$$
and this equivalence restricts to an equivalence for quasi-coherent modules 
$$\cO-mod_{qc} \we \holim(\cO_i-mod_{qc}).$$
\end{prop}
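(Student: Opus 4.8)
The plan is to establish the first equivalence $\cO\text{-mod} \we \holim(\cO_i\text{-mod})$ and then argue that it restricts to quasi-coherent objects on both sides. For the first equivalence, I would lean on the discussion immediately preceding the proposition: the diagram of ringed spaces $(\cU_I, \cO_I)$ determines a ring object $[\cO_I]$ in $\holim Sh(\cU_i, \Set)$, and modules over $[\cO_I]$ form a category equivalent to $\holim(\cO_i\text{-mod})$. Since $\cO \cong [\cO_I]$ by hypothesis, and since $\hocolim \cU_i$ is equivalent to $\m$ via the canonical homotopy decomposition, Corollary \ref{holim-sh} and Proposition \ref{desc-sh} (for the category $\cD = \Mod$, or rather its relative analogue over the ring object) transport $\cO\text{-mod}$ to $\holim(\cO_i\text{-mod})$. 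The essential input is that an $\cO$-module on $\hocolim \cU_i$ is the same datum as a compatible family of $\cO_i$-modules $F_i$ with gluing isomorphisms $\alpha_\phi\colon \phi^*F_j \we F_i$ satisfying the cocycle conditions, which is exactly an object of the homotopy limit as spelled out in Corollary \ref{holim-sh}.

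The main work is the second assertion, that the equivalence restricts to quasi-coherent modules. Here I would invoke Lemma \ref{subcat}. The strategy is to realize both $\cO\text{-mod}_{qc}$ and $\holim(\cO_i\text{-mod}_{qc})$ as full subcategories of $\cO\text{-mod}$ and $\holim(\cO_i\text{-mod})$ respectively, cut out by a condition that can be checked at the level of $\cC^0$ in the cosimplicial replacement. Concretely, after taking cosimplicial replacements $\cC^\bullet$ of the diagram $i \mapsto \cO_i\text{-mod}$ and $\cD^\bullet$ of $i \mapsto \cO_i\text{-mod}_{qc}$, each $\cD^i \inc \cC^i$ is a full subcategory, so Lemma \ref{subcat} gives
\[
\Tot(\cD^\bullet) = \cD^0 \times_{\cC^0} \Tot(\cC^\bullet),
\]
which says precisely that an object of $\holim(\cO_i\text{-mod})$ lies in $\holim(\cO_i\text{-mod}_{qc})$ if and only if its image in the $0$th category $\cC^0 = \prod \cO_i\text{-mod}$ is quasi-coherent in each factor. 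It then remains to match this condition with quasi-coherence of the corresponding $\cO$-module on $\hocolim \cU_i$.

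The hard part will be verifying that the quasi-coherence condition is genuinely \emph{local} in the sense required, so that it is detected by the components $F_i$ and is stable under the gluing data. The key observation is that quasi-coherence in Definition \ref{qcdef} is by design a condition checked locally on covers, and every map $X \ra \hocolim \cU_i$ factors through some $\cU_i$ (as established in the proof of the preceding proposition via simplicial replacement). Thus an $\cO$-module $\cF$ on $\hocolim \cU_i$ is locally presentable if and only if each restriction $F_i$ to $\cU_i$ is, since the covers of $X$ refining through the $\cU_i$ already generate the topology on $\cC/(\hocolim \cU_i)$ by Proposition \ref{covers}. One must also check that the isomorphisms $\alpha_\phi\colon \phi^*F_j \we F_i$ automatically preserve quasi-coherence — but this is immediate because $\phi^*$ of a quasi-coherent module is quasi-coherent, exactly as recorded at the start of the proof of Corollary \ref{equivqc}. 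Combining the Lemma \ref{subcat} decomposition with this local characterization identifies $\cO\text{-mod}_{qc}$ with the fiber product $\cD^0 \times_{\cC^0} \Tot(\cC^\bullet)$, which is $\holim(\cO_i\text{-mod}_{qc})$, completing the argument.
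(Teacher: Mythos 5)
Your proposal is correct and takes essentially the same route as the paper: the first equivalence follows from $Sh(\hocolim \cU_i, \Set) \we \holim Sh(\cU_i, \Set)$ together with the identification, from the discussion preceding the proposition, of modules over $[\cO_I]$ with $\holim(\cO_i\text{-mod})$, and the quasi-coherent statement is obtained by applying Lemma \ref{subcat}. The paper leaves that last step as a one-line remark; the componentwise detection of quasi-coherence you spell out (using that every map $X \ra \hocolim \cU_i$ factors through some $\cU_i$, so quasi-coherence of the glued module is equivalent to quasi-coherence of each $F_i$) is exactly the implicit content of that application.
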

\begin{proof}
Since $Sh(\hocolim \cU_i, \Set) \we \holim Sh(\cU_i, \Set)$
the categories of modules over the ring objects $\cO$ and $[\cO_I]$
are equivalent, and the category of modules over $[\cO_I]$ is equivalent 
to $\holim(\cO_i -mod)$. 
The proof for quasi-coherent modules follows by an application of
Lemma \ref{subcat}.
\end{proof}

\subsection{Descent for $\m_{(X_0,X_1)}$}

A groupoid object $(X_0,X_1) \in \cC$ determines a simplicial diagram 
in $\cC$:
$$\xymatrix{ \dots  X_1 \times_{X_0} X_1 \times_{X_0} X_1 \ar@3{->}[r] &
X_1 \times_{X_0} X_1 \ar@3[r]^{\ \ \ \ p_1,\mu}_{\ \ \  p_2} & X_1 \ar@2[r]^d_r & X_0.}$$
and therefore a simplicial diagram in $P(\cC,\Gr)$ 
which we denote by $(X_0,X_1)_\bullet$.
By definition, the presheaf of groupoids represented by $(X_0,X_1)$
is the geometric realization of this simplicial diagram and so
there is a weak equivalence
$$|(X_0,X_1)_\bullet| \we \m_{(X_0,X_1)}.$$
Using the model for the homotopy limit of a cosimplicial diagram given by $\Tot^2$
we see that an instance of Proposition \ref{desc-sh} is the following result.
\begin{cor} 
\label{descentsheaves}
The category of sheaves on a $\m_{(X_0,X_1)}$ 
is equivalent to the category with 
\begin{enumerate}
\item objects $(F,\alpha)$ with $F$ a sheaf on $X_0$ and $\alpha:d^*F \ra r^*F$ 
an isomorphism satisfying $i^*(\alpha)=id_F$ and $p_2^*(\alpha)\circ p_1^*(\alpha)=\mu^*(\alpha)$,
\item morphisms the maps of sheaves $\phi\colon F \to F'$  on $X_0$ satisfying
$r^*(\phi) \circ \alpha = \alpha' \circ d^*(\phi)$.
\end{enumerate}
\end{cor}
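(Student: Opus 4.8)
The plan is to obtain Corollary \ref{descentsheaves} as a direct specialization of Proposition \ref{desc-sh}, where the indexing category $I$ is the simplex category $\Delta$ and the $I$-diagram is the simplicial object $(X_0,X_1)_\bullet$ in $P(\cC,\Gr)$ determined by the groupoid object $(X_0,X_1)$. Since the presheaf of groupoids represented by $(X_0,X_1)$ is by definition the geometric realization of this diagram, there is a weak equivalence $|(X_0,X_1)_\bullet| \we \m_{(X_0,X_1)}$, so the hypotheses of Proposition \ref{desc-sh} are met with $\hocolim \cU_i = \m_{(X_0,X_1)}$. Taking $\cD = \Set$, Proposition \ref{desc-sh} then gives an equivalence of categories
$$Sh(\cC/\m_{(X_0,X_1)}) \we \holim_\Delta Sh(\cC/(X_0,X_1)_\bullet).$$
First I would identify each term $Sh(\cC/(X_0,X_1)_n)$ in the cosimplicial diagram. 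Because each $(X_0,X_1)_n$ is (represented by) an object of $\cC$, namely the iterated fiber product $X_1\times_{X_0}\cdots\times_{X_0}X_1$, the site $\cC/(X_0,X_1)_n$ is just the usual overcategory and $Sh(\cC/(X_0,X_1)_n)$ is sheaves on that object in the ordinary sense. The cosimplicial structure maps are the pullback functors induced by the face and degeneracy maps $d,r,i,p_1,p_2,\mu,\dots$ of the groupoid object.

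The heart of the argument is to compute the homotopy inverse limit using the explicit $\Tot^2$ model for a cosimplicial category recalled in Section \ref{hothcats}. That model says an object of $\Tot^2(\cC^\bullet)$ is a pair $(x,\alpha)$ with $x$ an object of $\cC^0$ and $\alpha\colon d^0x \we d^1x$ an isomorphism in $\cC^1$ satisfying the cocycle condition $d^2\alpha\circ d^0\alpha = d^1\alpha$ together with the normalization $s^0\alpha = id_x$; morphisms are maps $h\colon x\to y$ in $\cC^0$ compatible with the isomorphisms. I would apply this with $\cC^0 = Sh(X_0)$ and $\cC^1 = Sh(X_1)$, so that $x = F$ is a sheaf on $X_0$. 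The two coface maps $\cC^0 \to \cC^1$ are the pullbacks $d^*$ and $r^*$ along the two structure maps $d,r\colon X_1 \to X_0$, so the datum $\alpha$ becomes precisely an isomorphism $\alpha\colon d^*F \we r^*F$ of sheaves on $X_1$. Here I need to read off the dictionary between the abstract cosimplicial coface/codegeneracy operators and the concrete maps of the groupoid object: the codegeneracy $s^0$ corresponds to the identity/unit section $i$, forcing $i^*\alpha = id_F$, and the three cofaces into $\cC^2 = Sh(X_1\times_{X_0}X_1)$ correspond to $p_1$, $p_2$, and the multiplication $\mu$, turning the cocycle condition into $p_2^*(\alpha)\circ p_1^*(\alpha) = \mu^*(\alpha)$. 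The morphism description unwinds identically to the stated compatibility $r^*(\phi)\circ\alpha = \alpha'\circ d^*(\phi)$.

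The main obstacle I anticipate is bookkeeping rather than conceptual: matching the abstract indices $d^0,d^1,d^2,s^0$ of the $\Tot^2$ formula against the named structure maps $d,r,i,p_1,p_2,\mu$ of the groupoid object, and making sure the direction and normalization conventions line up so that the cocycle identity comes out with the correct composition order. One subtlety worth checking is that forming $\Tot^2$ rather than $\Tot$ is justified, which is guaranteed by the remark in Section \ref{hothcats} that $\Tot \simeq \Tot^2$ because the simplicial structure on $\cCat$ comes from a $\Gr$-enrichment (citing \cite[Theorems 2.9, 2.12]{H}). Once the identifications are in place the corollary follows by simply transcribing the $\Tot^2$ description, so I would present it as an essentially immediate unwinding of Proposition \ref{desc-sh}.
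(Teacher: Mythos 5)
Your proposal is correct and takes essentially the same route as the paper: the paper likewise presents the corollary as an instance of Proposition \ref{desc-sh} applied to the simplicial diagram $(X_0,X_1)_\bullet$, using the weak equivalence $|(X_0,X_1)_\bullet| \we \m_{(X_0,X_1)}$ and the $\Tot^2$ model for the homotopy limit of the resulting cosimplicial diagram of sheaf categories from Section \ref{hothcats}. The dictionary you spell out between the cosimplicial operators $d^0,d^1,d^2,s^0$ and the structure maps $d,r,i,p_1,p_2,\mu$ is precisely the unwinding the paper leaves implicit.
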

Similarly, given a sheaf of rings $\cO$ on $\m$, let $\cO_0$ be the pullback of $\cO$ to
$X_0$. The category of quasi-coherent $\cO$-modules is equivalent to the category with
objects $(F,\alpha)$ with $F$ a quasi-coherent $\cO_0$-module and  
$\alpha:d^*F \ra r^*F$ an isomorphism of $d^*\cO_0$-modules (where $r^*F$ is regarded
as a $d^*\cO_0$-module via the canonical isomorphism $d^*\cO_0 \simeq r^*\cO_0$) satisfying
the relations above.

\subsection{Quasi-Coherent Sheaves on a Hopf Algebroid}
\label{quasi-coh-flat}

In this section $\cC$ is the category 
affine schemes (and all morphisms between them) with the flat topology.
A groupoid object $(\spec A,\Spec \Gamma)$ in $\Aff_{flat}$
is called a Hopf algebroid.

Given $\m \in P(\Aff_{flat},\Gr)$ there is a natural choice 
of ``structure sheaf'' of rings $\cO_\m$ defined by 
$$\cO_\m(\spec R, a)=R.$$  
For the rest of this section quasi-coherent sheaves will always refer to 
quasi-coherent modules relative to this structure sheaf.

In the site $\Aff_{flat}/\Spec(R)$ faithfully flat descent of modules 
\cite[Remark I.2.19]{Mi} tells us that quasi-coherent modules are not only
 locally presentable, but globally presentable. We include the argument
 for completeness.
\begin{lemma} 
\label{modules}
The category quasi-coherent sheaves on $\Aff_{flat}/\Spec(R)$ 
is equivalent to the opposite category of $R$-modules.
\end{lemma}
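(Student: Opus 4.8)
The plan is to exhibit an explicit adjoint pair relating quasi-coherent sheaves on the site $\Aff_{flat}/\Spec(R)$ to $R$-modules, and then to use faithfully flat descent to show it is an equivalence. First I would construct a functor from $R$-modules to quasi-coherent sheaves: given an $R$-module $M$, define a presheaf $\widetilde{M}$ on $\Aff_{flat}/\Spec(R)$ by sending an object $(\Spec S \to \Spec R)$, i.e. an $R$-algebra $S$, to $S \otimes_R M$. The structure-sheaf value there is $\cO(\Spec S \to \Spec R)=S$, so $S\otimes_R M$ is an $S$-module, making $\widetilde{M}$ an $\cO$-module. I would check $\widetilde{M}$ is actually a sheaf for the flat topology — this is precisely faithfully flat descent for modules \cite[Remark I.2.19]{Mi} — and that it is quasi-coherent, since over any $\Spec S \to \Spec R$ a free presentation $\oplus_I R \to \oplus_J R \to M \to 0$ base-changes (tensoring with the flat, hence exact, $S$) to a presentation $\oplus_I S \to \oplus_J S \to S\otimes_R M \to 0$; in fact this gives global presentability. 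The contravariance in the statement (opposite category) comes from the fact that a map of $R$-modules $M \to N$ induces $S\otimes_R M \to S \otimes_R N$ covariantly, so I should instead view $M \mapsto \widetilde{M}$ as a functor $(R\text{-}\mathrm{mod})^{op} \to \cO\text{-}mod_{qc}$, or equivalently compare with the opposite category as in the statement.

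Next I would construct the inverse, the global sections functor: to a quasi-coherent sheaf $\cF$ assign its value $\Gamma(\cF) = \cF(\Spec R \xrightarrow{id} \Spec R)$ on the terminal object, which is a module over $\cO(id)=R$. I would verify the two composites are naturally isomorphic to the identity. For $\Gamma\circ\widetilde{(-)}$ this is immediate since $\widetilde{M}(id_{\Spec R}) = R\otimes_R M \cong M$. The substantive direction is the other composite: I must show the natural map $\widetilde{\Gamma(\cF)} \to \cF$, which over $\Spec S \to \Spec R$ is $S \otimes_R \cF(\Spec R) \to \cF(\Spec S)$, is an isomorphism. This is exactly where quasi-coherence is used: locally $\cF$ has a presentation, and the cokernel computation together with the sheaf condition forces $\cF(\Spec S)$ to be the base change of $\cF(\Spec R)$.

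The main obstacle I expect is precisely this last step — proving that a quasi-coherent sheaf on the over-site is determined by its global sections via base change, i.e. that local presentability upgrades to global presentability. The mechanism is faithfully flat descent: given $\Spec S \to \Spec R$ faithfully flat and a presentation of $\cF$ pulled back to $S$, one descends the presentation back along the cover, using that $\cF$ satisfies the sheaf (descent) condition and that taking cokernels commutes with the flat base changes involved. Concretely, I would use that $\{\Spec S \to \Spec R\}$ with $S$ faithfully flat is a cover, write the equalizer diagram expressing $\cF(\Spec R)$ in terms of $\cF(\Spec S)$ and $\cF(\Spec S\otimes_R S)$, and match it against the corresponding equalizer for the module $\Gamma(\cF)$; the flatness guarantees the two equalizers agree. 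Once global presentability is established, comparing a presentation of $\cF$ with the free presentation of $\Gamma(\cF)$ and using right-exactness of $S\otimes_R(-)$ closes the argument, yielding the claimed equivalence with $(R\text{-}\mathrm{mod})^{op}$.
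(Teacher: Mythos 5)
Your proposal matches the paper's proof in all essentials: you construct the same functor $M \mapsto \bigl(\Spec S \mapsto S \otimes_R M\bigr)$, and you use the same mechanism for the converse direction, namely exactness of the Amitsur complex for a faithfully flat cover (Milne's \cite[Proposition I.2.7, Remark I.2.19]{Mi}) tensored against the local modules to compare the sheaf equalizer with the module equalizer and thereby upgrade the local presentations guaranteed by quasi-coherence to a single global one, the only cosmetic difference being that you package the inverse as a global-sections adjoint while the paper verifies full faithfulness and essential surjectivity of $M \mapsto F_M$ directly. One minor point: as you yourself observe, the functor is \emph{covariant} in $M$, so your parenthetical move to $(R\text{-}\mathrm{mod})^{op}$ is not forced by functoriality --- the paper's own proof likewise exhibits a covariant full and faithful functor from $R$-modules, notwithstanding the wording of the statement.
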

\begin{proof}
Since $\otimes$ is right exact there is a functor from $R$-modules to 
quasi-coherent sheaves sending $M \to F_M$ where 
\[ F_M( \Spec R') = M\otimes_R R'. \]
It is clear that this functor is full and faithful.
Given a quasi-coherent sheaf $\cF$ on $\Aff_{flat}/\Spec(R)$
the definition of quasi-coherent implies that there is a 
cover $\{\Spec S_i \to \Spec R\}$ and $S_i$-modules $M_i$ so that
$$\cF|_{\Aff/\spec S_i}\cong M_i \otimes_{S_i} (-)$$
(since $\otimes$ is right exact). Evaluating $\cF$ on $S_i \otimes_R S_j$ we see that
\begin{equation} \label{1} 
M_i \otimes_R S_j \cong M_j \otimes_R S_i.
\end{equation}
$\cF(\spec R)$ is the equalizer
\begin{equation} \label{2}
\prod M_i \dbra \prod_{i,j} M_i \otimes_R S_j.
\end{equation}
Let $M$ be the value of this equalizer. Since $\spec S_i \ra \spec R$ is a cover
\begin{equation} \label{3}
R \ra \prod_i S_i \dbra \prod_{i,j} S_i \otimes_R S_j
\end{equation}
is exact on the left and remains so when we tensor with any $R$-module 
\cite[Proposition I.2.7, Remark I.2.19]{Mi}.
It follows that we can tensor equation \eqref{3} with $M_k$
and tensor \eqref{2} with $S_k$ to obtain the 
following isomorphisms 
$$\xymatrix{
\prod_i M_k \otimes_R S_i  \ar@2[r] \ar[d]^\cong & 
\prod_{i,j} M_k \otimes_R S_i \otimes_R S_j \ar[d]^\cong \\ 
\prod_i M_i \otimes_R S_k  \ar@2[r] & 
\prod_{i,j} M_i \otimes_R S_j \otimes_R S_k }$$ 
which induce an isomorphism between the equalizers $M \otimes_R S_k \cong M_k$.
Given a map of rings $R \to R'$, a similar argument shows that 
$\cF(\spec R')\cong M\otimes_R R'$, which completes the proof.
\end{proof}

\begin{prop}
\label{comodulesqc}
The category of quasi-coherent sheaves on $(\spec A,\Spec \Gamma)$ 
(or $\m_{(\spec A,\Spec \Gamma)}$) is
equivalent to the category of comodules on the Hopf algebroid $(A, \Gamma)$.
\end{prop}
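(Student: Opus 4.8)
The plan is to specialize the general descent machinery to the Hopf algebroid situation and then identify both sides of the resulting equivalence explicitly. The key observation is that the groupoid object $(\spec A, \Spec\Gamma)$ in $\Aff_{flat}$ has $X_0 = \spec A$ and $X_1 = \Spec\Gamma$, so Corollary \ref{descentsheaves} applies directly. First I would invoke the quasi-coherent version of Corollary \ref{descentsheaves} (the last paragraph of that result), which describes quasi-coherent $\cO$-modules on $\m_{(\spec A,\Spec\Gamma)}$ as pairs $(F,\alpha)$ where $F$ is a quasi-coherent $\cO_0$-module on $X_0 = \spec A$ and $\alpha\colon d^*F \ra r^*F$ is an isomorphism of $d^*\cO_0$-modules satisfying the cocycle conditions $i^*(\alpha)=\mathrm{id}_F$ and $p_2^*(\alpha)\circ p_1^*(\alpha)=\mu^*(\alpha)$.

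Next I would translate this description into purely algebraic terms using Lemma \ref{modules}. Since $\cO_0$ is the structure sheaf with $\cO_0(\spec R)=R$, its pullback to $\spec A$ corresponds to $A$, and Lemma \ref{modules} gives an equivalence between quasi-coherent sheaves on $\Aff_{flat}/\spec A$ and the opposite category of $A$-modules. Under this equivalence a quasi-coherent $\cO_0$-module $F$ on $X_0$ corresponds to an $A$-module $M$. The two structure maps $d,r\colon \Spec\Gamma \ra \spec A$ correspond to the left and right unit maps $\eta_L,\eta_R\colon A \ra \Gamma$, so pulling back $F$ along $d$ and $r$ produces the $\Gamma$-modules $\Gamma\otimes_A M$ (via $\eta_L$) and $M\otimes_A\Gamma$ (via $\eta_R$), respectively. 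The isomorphism $\alpha\colon d^*F \we r^*F$ then becomes a $\Gamma$-linear isomorphism $\Gamma\otimes_A M \we M\otimes_A\Gamma$, or equivalently, after adjunction, a coassociative, counital coaction map $\psi\colon M \ra M\otimes_A\Gamma = \Gamma\otimes_A M$. I would check that the cocycle relations $i^*(\alpha)=\mathrm{id}$ and $p_2^*(\alpha)\circ p_1^*(\alpha)=\mu^*(\alpha)$ correspond precisely to the counit axiom and the coassociativity axiom of a $(A,\Gamma)$-comodule, using that $i$ corresponds to the counit $\epsilon\colon\Gamma\ra A$ and $\mu$ to the comultiplication $\Delta\colon\Gamma\ra\Gamma\otimes_A\Gamma$.

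The main obstacle I anticipate is the bookkeeping of the dictionary between geometric pullbacks and algebraic tensor products, in particular keeping the left and right module structures on $\Gamma$ straight and verifying that the contravariance in Lemma \ref{modules} (the \emph{opposite} category of $A$-modules) is absorbed correctly so that comodules — not their opposite — appear on the nose. One must confirm that the faithfully flat descent isomorphism \eqref{1} appearing in Lemma \ref{modules} is exactly what forces $\alpha$ to be $\Gamma$-linear, and that morphisms of pairs $(F,\alpha)\ra(F',\alpha')$ satisfying $r^*(\phi)\circ\alpha = \alpha'\circ d^*(\phi)$ correspond to comodule morphisms. Once these identifications are made, the equivalence of categories asserted in the proposition follows by composing the equivalence of Corollary \ref{descentsheaves} with the objectwise equivalence of Lemma \ref{modules}, the latter being compatible with the diagram structure because it is natural in the ring. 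Finally, the parenthetical claim about $\m_{(\spec A,\Spec\Gamma)}$ is immediate from the weak equivalence $|(\spec A,\Spec\Gamma)_\bullet|\we\m_{(\spec A,\Spec\Gamma)}$ together with Corollary \ref{equivqc}, which guarantees that quasi-coherent modules are invariant under the local weak equivalence to the stackification.
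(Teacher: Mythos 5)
Your proposal is correct and follows essentially the same route as the paper: the paper likewise combines Corollary \ref{descentsheaves} with Lemma \ref{modules} to reduce to pairs $(M,\alpha)$ with $\alpha\colon \Gamma\otimes_A M \to M\otimes_A\Gamma$ a $\Gamma$-linear isomorphism satisfying the cocycle condition, and then produces the coaction exactly as your adjoint map, namely $\phi = \alpha\circ(R\otimes 1)\colon M \to M\otimes_A\Gamma$, verifying by diagram chases that the cocycle and unit conditions correspond to coassociativity and counitality (and conversely that a comodule structure extends to such an $\alpha$). The only cosmetic difference is that the paper absorbs the passage to $\m_{(\spec A,\Spec\Gamma)}$ into Corollary \ref{descentsheaves} itself rather than citing Corollary \ref{equivqc} separately, as you do.
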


\begin{proof}
An Hopf algebroid $(A,\Gamma)$ yields a diagram
$$\xymatrix{ A \ar@2[r]^L_R & \Gamma 
\ar@3[r]  & \Gamma\otimes_A \Gamma}$$
where the maps $\Gamma \to \Gamma \otimes_A \Gamma$ are $L\otimes {1_\Gamma}, \mu$ and
$1_\Gamma\otimes R$ with $\mu$ the comultiplication.
By Corollary \ref{descentsheaves} and Lemma \ref{modules}, a 
 quasi-coherent sheaf consists of an $A$ module $M$ and an isomorphism of 
 $\Gamma$-modules
$\Gamma \otimes_A M \llra{\alpha} M \otimes \Gamma$ making the following 
diagram commute:
$$\xymatrix{ \Gamma \otimes_A M \ar[r]^{\mu \otimes 1} \ar[dr]^{\alpha} & 
\Gamma \otimes_A \Gamma \otimes_A M \ar[r]^{1 \otimes \alpha}
\ar[dr]^{\alpha \otimes \mu} & \Gamma \otimes_A M \otimes_A \Gamma 
\ar[d]^{\alpha \otimes 1} \\ & M \otimes_A \Gamma \ar[r]^{1 \otimes \mu} 
& M \otimes_A \Gamma \otimes_A \Gamma.}$$
Let $\phi$ be defined as the composition
$$M \llra{R \otimes 1} \Gamma \otimes_A M \llra{\alpha} M \otimes_A \Gamma.$$
Precomposing the commutative diagram above with the map 
$M \llra{R \otimes 1} \Gamma \otimes_A M$ 
and using the identity $1\otimes R \otimes 1 = \mu\otimes 1 \circ R \otimes 1$, one
can see that the composition along the top and down to $M\otimes_A \Gamma \otimes_A \Gamma$
is $(\phi\otimes 1) \circ \phi$. The composition along the bottom is $(1 \otimes \mu) \circ \phi$.
so $\phi$ defines a comodule structure on $M$ (see \cite[Appendix A.1]{Ra}). 

Conversely a comodule structure on $M$ is a map of $A$ bimodules
$M \ra M \otimes_A \Gamma$ and so there is an extension of this map over
$M \llra{R \otimes 1} \Gamma \otimes_A M$ providing a $\Gamma$-module 
isomorphism $\Gamma \otimes_A M \llra{\alpha} M \otimes_A \Gamma$.
Another diagram chase shows that the comodule identity is equivalent to the
condition that $\alpha$ satisfies descent.
\end{proof}

The previous result together with Corollary \ref{equivqc}
yields the following Theorem of M. Hovey \cite[Theorems A and C]{Ho}.

\begin{cor}
\label{eq-ha}
Let $(A,\Gamma_A)$ and $(B,\Gamma_B)$ be two Hopf algebroids, for which 
$(\spec A,\spec \Gamma_A)$ and $(\spec B, \spec \Gamma_B)$ are
weakly equivalent in $P(\Aff_{flat},\Gr)_L$.
The category of $(A,\Gamma_A)$ comodules is equivalent to the category 
of $(B,\Gamma_B)$-comodules. 
\end{cor}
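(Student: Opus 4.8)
The plan is to reduce the statement to the two results just established: Proposition \ref{comodulesqc}, which recasts comodules as quasi-coherent sheaves, and Corollary \ref{equivqc}, which asserts that quasi-coherent sheaves are invariant under local weak equivalence. First I would translate both sides of the desired equivalence into the language of quasi-coherent sheaves. Writing $\m:=(\spec A,\spec \Gamma_A)$ and $\m':=(\spec B,\spec \Gamma_B)$, each equipped with its canonical structure sheaf $\cO_\m$ (resp. $\cO_{\m'}$) given by $\cO_\m(\spec R,a)=R$, Proposition \ref{comodulesqc} identifies the category of $(A,\Gamma_A)$-comodules with the category of quasi-coherent $\cO_\m$-modules, and the category of $(B,\Gamma_B)$-comodules with the category of quasi-coherent $\cO_{\m'}$-modules. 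It therefore suffices to produce an equivalence between these two categories of quasi-coherent sheaves.

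Since $\m$ and $\m'$ are weakly equivalent in $P(\Aff_{flat},\Gr)_L$, they are joined by a zigzag of weak equivalences, and because an equivalence of categories is invertible it is enough to treat a single weak equivalence $p\colon\m'\to\m$ and then compose the resulting equivalences along the zigzag. For such a $p$, Corollary \ref{equivqc}, applied with $\cO=\cO_\m$, furnishes an equivalence between quasi-coherent $\cO_\m$-modules and quasi-coherent $p^*\cO_\m$-modules.

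The one point requiring care---and the crux of the argument---is the identification $p^*\cO_\m\cong\cO_{\m'}$ of structure sheaves, for only then does Corollary \ref{equivqc} compare the two categories appearing in the translation above. This holds precisely because the canonical structure sheaf depends only on the source scheme and not on the chosen map: pullback along $p$ carries an object $(\spec R,a')$ of $\cC/\m'$ to $(\spec R,p\circ a')$ of $\cC/\m$ without altering $\spec R$, so $p^*\cO_\m(\spec R,a')=\cO_\m(\spec R,p\circ a')=R=\cO_{\m'}(\spec R,a')$, naturally in $(\spec R,a')$. With this identification in hand, Corollary \ref{equivqc} yields an equivalence between quasi-coherent $\cO_\m$-modules and quasi-coherent $\cO_{\m'}$-modules; composing it with the two instances of Proposition \ref{comodulesqc} produces the asserted equivalence between $(A,\Gamma_A)$-comodules and $(B,\Gamma_B)$-comodules, completing the proof.
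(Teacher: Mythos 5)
Your proof is correct and takes essentially the same route as the paper, which derives Corollary \ref{eq-ha} in one line by combining Proposition \ref{comodulesqc} with Corollary \ref{equivqc}. The details you supply---reducing a zigzag of weak equivalences to a single one, and the identification $p^*\cO_\m\cong\cO_{\m'}$ of canonical structure sheaves that makes Corollary \ref{equivqc} applicable---are exactly the points the paper leaves implicit, and you handle them correctly.
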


For the sake of completeness, we use the equivalence of categories of
Proposition \ref{comodulesqc}
to provide the reader with
an example of a category of quasicoherent sheaves which is not an
abelian category and conclude with some related remarks.
\begin{example}
Consider the Hopf algebroid $(\Z, \Z[\epsilon]/(p\epsilon,\epsilon^2))$
where the composition 
$$\Z[\epsilon]/(p\epsilon,\epsilon^2) \ra 
\Z[\epsilon_1]/(p\epsilon_1,\epsilon_1^2) \otimes_\Z
\Z[\epsilon_2]/(p\epsilon_2,\epsilon_2^2)$$
is given by sending $\epsilon$ to $(\epsilon_1 + \epsilon_2 +
\epsilon_1\epsilon_2)$.
Every abelian group $M$ has (at least) two comodule structures
$$M\ra \Z[\epsilon]/(p\epsilon,\epsilon^2) \otimes M$$
one of which is given by $1\otimes id_M$ and the other 
by $(1+\epsilon)\otimes id_M$. We denote these by $(M,1)$
and $(M,1+\epsilon)$ respectively. 
Consider the epimorphism $(\Z/p^2,1) \llra{r} (\Z/p,1)$.
There are monomorphisms
$$(\Z/p,1) \llra{i} (\Z/p^2,1), \quad \text{and} \quad
(\Z/p,1+\epsilon) \llra{i'} (\Z/p^2,1)$$
such that $r$ is the cokernel of both $i$ and $i'$.
Clearly $(\Z/p,1)$ and $(\Z/p,1+\epsilon)$
are not isomorphic but in an abelian category a monomorphism
must be the kernel of its cokernel.
\end{example}

Even when the category of quasi-coherent $\cO_{\m}$-modules is an abelian
category, the inclusion into $\cO_{\m}$-modules is not necessarily exact.
An example to consider is the multiplication by $p$ map
$$\cO_{\spec \Z} \llra{p} \cO_{\spec \Z}.$$
Let $K_p$ be the kernel of this map as an $\cO_{\spec \Z}$ module.
Then $K_p(\spec R)$ is the $p$-torsion in $R$ and so $K_p$ is not quasi-coherent.
The kernel of multiplication by $p$ within quasi-coherent modules
exists and is $0$. 

Furthermore, given a map $\spec R' \llra{f} \spec R$, pullback of sheaves is an exact functor
 but pullback of quasi-coherent sheaves is not in general: the pullback of a quasi-coherent sheaf
 $F_M$ on $\spec R$ is  
\[ f^*F_M(\spec R')=F_M(\spec R' \ra \spec R)  \cong F_{M\otimes_R R'}\]
 and therefore, for quasi-coherent sheaves,  the pullback functor corresponds to the tensor product 
$(-)\otimes_R R'$ which is not always exact.

\subsection{A different approach}
An alternate approach to the descent statements in this section
and the homotopy invariance of the previous section would be
to make use of the {\it stack of sheaves}, which we learned 
about from M. Hopkins.  

Disregarding set theoretic questions one can define a 
stack of sheaves $\cS \in P(\cC,\Gr)$ associating to $X \in \cC$ the 
groupoid of sheaves on $X$.  In a similar fashion one can define a 
category object in $P(\cC,\Gr)$, $(\cS,\cS_{map})$ where $\cS_{map}$
classifies maps between sheaves.

Given $\m \in P(\cC,\Gr)$ one could then define $Sh(\m)=\Hom(\m, \cS)$.  
With this definition, a sheaf on $\m$ would consist of a compatible assignment
of sheaves to each $X \in \cC$ and map $X \ra \m$.  
The equivalence of this definition with our definition of sheaf on $\cC/\m$
should come down to the equivalence of $\cC/\m$ with 
the homotopy colimit over $\cC/\m$ of the categories $\cC/X$.

A levelwise weak equivalence $\m \ra \m'$ induces an equivalence 
of sites $\cC/\m \we \cC/\m'$ and hence of categories of sheaves.
Since cofibrant replacement is a levelwise weak equivalence we 
would have 
$$\Hom(\m, \cS) \we h\Hom(\m, \cS).$$ 
An immediate corollary of this would be our homotopy invariance and 
descent results.

\end{document}